\newtheorem{theorem}{Theorem}[section]
\newtheorem{cor}{Corollary}[section]
\newcommand{\md}{\mathop{\rm (mod}\nolimits}
\newcommand{\conv}{\mathop{\rm conv}\nolimits}
\newcommand{\cov}{\mathop{\rm cov}\nolimits}
\newcommand{\fc}{\mathop{\rm fc}\nolimits}
\newcommand{\but}{\mathop{\rm BUT}\nolimits}
\DeclareMathOperator{\dg2}{deg_2}
\title {Extensions of Sperner and Tucker's lemma for manifolds}
\author {Oleg R. Musin\thanks{This research is partially supported by NSF grant DMS - 1101688.}}
\begin{document}

	\ifpdf \DeclareGraphicsExtensions{.pdf, .jpg, .tif, .mps} \else
	\DeclareGraphicsExtensions{.eps, .jpg, .mps} \fi	
	
\date{}
\maketitle

\begin{abstract} The Sperner and Tucker lemmas are combinatorial analogous of the Brouwer and Borsuk - Ulam theorems with many useful applications. These classic lemmas are concerning labellings of triangulated discs and spheres. In this paper we show that discs and spheres can be substituted by large classes of manifolds with or without boundary. 
\end{abstract}

\medskip

\noindent {\bf Keywords:} Sperner's lemma, Tucker's lemma, the Borsuk-Ulam theorem. 

\section{Introduction}

Throughout this paper the symbol ${\mathbb R}^d$ denotes the Euclidean space of dimension $d$.  We denote by  ${\mathbb B}^d$ the $d$-dimensional ball and by  ${\mathbb S}^d$ the $d$-dimensional sphere. If we consider ${\mathbb S}^d$ as the set of unit vectors $x$ in ${\mathbb R}^{d+1}$, then points $x$ and $-x$ are called {\it antipodal} and the symmetry given by the mapping  
 $x \to -x$ is called the {\it antipodality} on  ${\mathbb S}^d$. 

\subsection{Sperner's lemma}

{\it Sperner's lemma} is   a statement about labellings of triangulated simplices ($d$-balls). It is a discrete analog of the Brouwer fixed point theorem.

\medskip

\begin{figure}[h]
\begin{center}
  \includegraphics[clip]{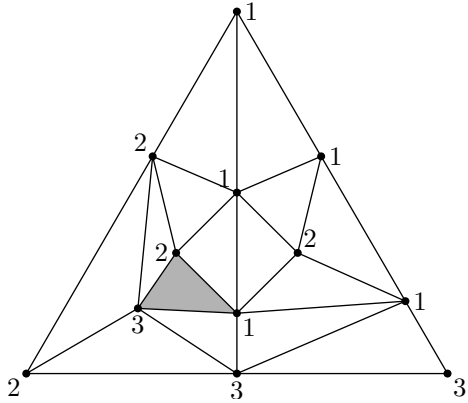}
\end{center}
\caption{A 2-dimensional illustration of Sperner's lemma}
\end{figure}

\medskip

Let $S$ be a $d$-dimensional simplex  with vertices $v_1,\ldots,v_{d+1}$. Let $T$ be a triangulation of $S$. Suppose that each vertex of $T$ is assigned a unique label from the set $\{1,2,\ldots,d+1\}$. A labelling $L$ is called {\it Sperner's} if  the vertices are labelled in such a way that a vertex   of $T$ belonging to the interior of a face $F$ of $S$ can only be labelled by $k$ if $v_k$ is on $S$.   


\begin{theorem}{\bf (Sperner's lemma \cite{Sperner})} 
	Every Sperner labelling of a triangulation of a $d$-dimensional simplex contains a cell labelled with a complete set of labels: $\{1,2,\ldots, d+1\}$.
\end{theorem}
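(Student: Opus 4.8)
The plan is to prove the stronger statement that the number of cells of $T$ receiving the full label set $\{1,2,\dots,d+1\}$ is \emph{odd}; this obviously implies it is nonzero. I would argue by induction on the dimension $d$. For $d=0$ a simplex is a single point, a triangulation is that point with its unique label $1$, and the statement is trivial.

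For the inductive step, call a $(d-1)$-face of a cell of $T$ a \emph{door} if its $d$ vertices carry precisely the labels $\{1,2,\dots,d\}$, and call a $d$-cell \emph{full} if its vertices carry all labels $\{1,\dots,d+1\}$. First I would count, modulo $2$, the incidences between $d$-cells and their doors. A short case analysis on the multiset of labels of a $d$-cell shows: a full cell contains exactly one door; a cell whose label set is exactly $\{1,\dots,d\}$ (so one label is repeated) contains exactly two doors; any other cell contains no door. Hence the total number of (cell, door) incidences is congruent mod $2$ to the number of full cells.

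Next I would count the same incidences by summing over doors. A door lying in the interior of $S$ is a face of exactly two $d$-cells, while a door lying on the boundary $\partial S$ is a face of exactly one $d$-cell; so modulo $2$ the incidence count equals the number of boundary doors. Here the Sperner condition enters: a boundary $(d-1)$-face lies in the interior of some proper face $F$ of $S$, and if its labels are $\{1,\dots,d\}$ then $F$ must contain all the vertices $v_1,\dots,v_d$, forcing $F$ to be the facet of $S$ opposite $v_{d+1}$. Thus every boundary door lies on this one facet $F_0$. Restricting $T$ and $L$ to $F_0$ gives a triangulation of a $(d-1)$-simplex together with a Sperner labelling by $\{1,\dots,d\}$, whose full cells are exactly the boundary doors; by the induction hypothesis their number is odd.

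Combining the two counts, the number of full $d$-cells is congruent to the number of boundary doors, which is odd, completing the induction. The only delicate points are the two elementary case analyses (that a $d$-cell has $0$, $1$, or $2$ doors, and that an interior door is shared by exactly two cells) and the verification that the Sperner boundary condition confines all boundary doors to the single facet $F_0$ — this last step is where the hypothesis is genuinely used and is the main thing to get right. An alternative, essentially equivalent, packaging is to form a graph whose nodes are the $d$-cells together with one extra ``outer'' node, joining two nodes whenever they share a door (the outer node being joined across boundary doors); then full cells are exactly the odd-degree interior nodes, and the handshake lemma together with the induction on $\partial S$ finishes the argument.
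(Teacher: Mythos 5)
Your proof is correct: it is the classical ``doors and rooms'' parity argument, and the two case analyses you flag (a $d$-cell has $0$, $1$ or $2$ doors according as its label set fails to contain $\{1,\dots,d\}$, equals $\{1,\dots,d\}$, or is full; an interior door bounds two cells and a boundary door one) together with the observation that the Sperner condition pushes every boundary door into the single facet $v_1\cdots v_d$ are exactly the right things to check, and they all go through. The paper, however, takes a genuinely different route. It does not prove Theorem 1.1 directly at all; it cites Sperner and then recovers the lemma as the special case $M=S$ of Corollary \ref{cor1}, which concerns a compact PL manifold $M$ whose boundary is a triangulated sphere. That proof glues $M$ to a $d$-simplex along the boundary to obtain a closed manifold $N$, observes that the induced piecewise-linear map $f_{L,P}:N\to{\Bbb R}^d$ has mod-2 degree zero (a point outside the convex hull of the target points has no preimages), so a closed manifold carries an \emph{even} number of full cells (Corollary \ref{Sp1}), and then cones off the boundary and uses induction on $d$ to show the cone contributes an odd number of full cells. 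Both arguments are inductions on dimension proving the same stronger parity statement, and the paper's gluing-plus-cone step plays structurally the same role as your count of boundary doors. What your version buys is complete elementarity and self-containedness — no degree theory, no manifold gluing. What the paper's version buys is immediate generality: the same degree-theoretic skeleton gives the statement for arbitrary closed PL manifolds, for manifolds with polytopal boundary (the De Loera--Peterson--Su setting), and for the $\cov_P(x)$ refinements in Section 3, none of which follow from the purely combinatorial door count without further work.
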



\medskip

There are several extensions of this lemma. One of the most interesting is the De Loera - Petersen - Su theorem. In the paper \cite{DeLPS} they proved the Atanassov conjecture \cite{Atan}.
\begin{theorem} {\bf (Polytopal Sperner's lemma \cite{DeLPS})}
Let $P$ be a  polytope in ${\mathbb R}^d$  with vertices $v_1,\ldots,v_n$. Let $T$ be a triangulation of  $P$. Let $L:V(T)\to\{1,2,\ldots,n\}$ be a Sperner labelling.
Then there are at least $(n-d)$ fully-colored (i.e. with distinct labels) $d$-simplices of $T$.
\end{theorem}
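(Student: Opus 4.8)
The plan is to encode the labelling as a piecewise-linear map into a high-dimensional simplex and then count fully-colored simplices by a degree (intersection-number) argument. Let $\Delta^{n-1}=\conv(e_1,\dots,e_n)\subset{\mathbb R}^n$ be the standard simplex on the coordinate vectors, and for a face $F$ of $P$ set $\Delta_F:=\conv\{e_i: v_i\in F\}$, a face of $\Delta^{n-1}$. Define $f\colon P\to\Delta^{n-1}$ by $f(w)=e_{L(w)}$ on the vertices $w$ of $T$ and extend affinely over each simplex of $T$. Since a triangulation of $P$ cannot cover a neighborhood of an exposed point without using that point as a vertex, every vertex $v_i$ of $P$ is a vertex of $T$ and then $L(v_i)=i$ is forced; more generally, the Sperner condition is equivalent to $f(F)\subseteq\Delta_F$ for every face $F$ of $P$, so in particular $f(\partial P)\subseteq\partial\Delta^{n-1}$. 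The key point is that a $d$-simplex $\sigma$ of $T$ is fully colored exactly when $f|_\sigma$ is an affine isomorphism onto the $d$-face $\Delta_I$ with $I=\{L(w):w\in\sigma\}$. Hence, writing $\Gamma=\sum_\sigma\varepsilon_\sigma\sigma$ for the sum of the $d$-simplices of $T$ equipped with a coherent orientation and expanding $f_*\Gamma=\sum_{|I|=d+1}c_I\,\Delta_I$ over the $d$-faces of $\Delta^{n-1}$, each coefficient $c_I$ is a signed count of fully-colored $d$-simplices with color set $I$. Consequently, if $m$ is the number of fully-colored $d$-simplices of $T$, then
\[
  m\ \geq\ \sum_{|I|=d+1}|c_I|.
\]

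First I would reduce to a combinatorial estimate in $\Delta^{n-1}$. The chain $z:=\partial(f_*\Gamma)=f_*(\partial\Gamma)$ equals the image of the fundamental $(d-1)$-cycle of $\partial P$; it is a $(d-1)$-cycle supported on the subcomplex $\bigcup\{\Delta_G: G \text{ a facet of } P\}$, and its support contains every vertex $e_1,\dots,e_n$ (because $f(v_i)=e_i$ with $v_i\in\partial P$). Therefore $\sum_{|I|=d+1}|c_I|$ is bounded below by the $\ell^1$-filling norm of $z$, that is, by the minimum of $\|c\|_1$ over $d$-chains $c$ in $\Delta^{n-1}$ with $\partial c=z$, and it suffices to prove that this filling norm is at least $n-d$.

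For the filling bound I would use linear-programming duality: construct a $(d-1)$-cochain $w$ on $\Delta^{n-1}$ with $\|\delta w\|_\infty\leq 1$ and $\langle w,z\rangle\geq n-d$; then for every filling $c$ of $z$ one gets $\|c\|_1\geq\langle\delta w,c\rangle=\langle w,\partial c\rangle=\langle w,z\rangle\geq n-d$. In the case $d=2$ this already works with the explicit cochain that assigns $0$ to every edge of $\Delta^{n-1}$ incident to $e_1$ and $+1$ to every edge $[e_a,e_b]$ with $1<a<b$: one checks directly that $\delta w$ takes only the values $0$ and $1$, and that $\langle w,z\rangle$, evaluated by traversing $\partial P$ once, telescopes to $+1$ along each edge $v_iv_{i+1}$ of the polygon with $i\neq 1,n$ and to $0$ along the two edges incident to $v_1$, so that $\langle w,z\rangle=n-2$. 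For general $d$ the cochain should be built on the same template, vanishing on the $(d-1)$-faces that meet a fixed block of colors, and the pairing $\langle w,z\rangle$ should be computed by induction on the dimension, using that the restriction of $f$ to each facet $G$ of $P$ is again a polytopal Sperner map for the polytope $G$, so that the inductive hypothesis applies along $\partial P$.

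The main obstacle is exactly this construction of the dual cochain in all dimensions, with the right orientation conventions, and the verification that $\langle w,z\rangle$ actually reaches $n-d$ uniformly in $T$ and $L$. This requires more than the bare facts that $z$ is a cycle through all the vertices $e_i$ — a general such cycle may have much smaller filling norm — so the argument has to exploit that $z$ is the image of the fundamental class of the sphere $\partial P$ compatibly with the facet structure of $P$. A self-contained alternative is the combinatorial path-following argument of De Loera, Peterson and Su, which constructs $n-d$ pairwise disjoint chains of $d$-simplices whose terminal members are the required fully-colored simplices; there the corresponding difficulty is to bound from below the number of admissible initial simplices on $\partial P$, again by an induction on dimension.
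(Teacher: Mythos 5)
Your reduction to a filling-norm estimate is sound as far as it goes: the piecewise-linear map $f$ into $\Delta^{n-1}$, the identification of fully-colored $d$-simplices with the nondegenerate terms of the image chain, and the inequality $m\ge\sum_{|I|=d+1}|c_I|\ge\min\{\|c\|_1:\partial c=z\}$ are all correct, and your explicit dual cochain does settle the case $d=2$. But the content of the theorem is the bound $n-d$ for every $d$, and that is exactly the step you defer: you give no construction of a $(d-1)$-cochain $w$ with $\|\delta w\|_\infty\le1$ and $\langle w,z\rangle\ge n-d$ for $d\ge3$, and you correctly observe yourself that the properties of $z$ you have actually established (a $(d-1)$-cycle supported on $\bigcup_G\Delta_G$ whose support contains all $n$ vertices) do not suffice --- for example, a sum of boundaries of a few disjoint $d$-faces covering all the vertices is such a cycle with filling norm far below $n-d$. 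So the proposal proves the theorem only for $d=2$ and leaves the general case as an acknowledged obstacle; that is a genuine gap, not a routine verification.

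For comparison, the paper closes precisely this quantitative step by a different device. It keeps the target in ${\mathbb R}^d$: the map $f_{L,P}$ sends a vertex labelled $i$ to the vertex $p_i$ of $P$ itself, and a mod-2 degree (cone/gluing) argument shows that for any point $x$ whose covering family $\cov_P(x)$ consists of $d$-simplices, the number of full cells labelled as simplices of $\cov_P(x)$ is odd, hence at least one. The count $n-d$ then comes from the pebble-set theorem of De Loera, Peterson and Su: $P$ contains $n-d$ points $x_1,\dots,x_{n-d}$ with $\cov_P(x_i)\cap\cov_P(x_j)=\emptyset$ for $i\ne j$, so these odd counts are realized by distinct full cells. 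If you want to finish along your own lines, the pebble set is the missing ingredient you would have to reproduce: $n-d$ pairwise independent ``test functionals'' (generic points, or dual cochains) each certified by the degree of $f$ on $\partial P$. Without that input --- or without actually carrying out the De Loera--Peterson--Su path-following argument you mention only as an alternative --- the proof is not complete.
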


\medskip

Meunier \cite{Meun} extended this theorem:
\begin{theorem}
 Let $P^d$ be  a $d$-dimensional PL manifold embedded in ${\mathbb R}^d$  that has bondary $B$. Suppose $B$ has $n$ vertices $v_1,\ldots,v_n$. Let $T$ be a triangulation of $P$.  Let $L:V(T)\to\{1,2,\ldots,n\}$ be a Sperner labelling. Let $d_i$ denote  the number of edges of $B$ which are connected to $v_i$. Then there are at least $n+\lceil\min_i\{d_i\}/d\rceil - d -1$ fully-labelled $d$-simplices such that any pair of these fully-labelled simplices receives two different labellings. 
\end{theorem}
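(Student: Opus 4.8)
\medskip
\noindent\textbf{Proof proposal.}
The plan is to combine a degree computation on the boundary manifold with an induction on $n$ that ``peels off'' the coarse vertices one at a time, reading off the extra term $\lceil\min_i d_i/d\rceil$ from a local analysis at a vertex of minimum boundary-degree.

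I would begin by recording two facts about the boundary. Since $P^d$ is a PL manifold with boundary, $B=\partial P$ is a closed PL $(d-1)$-manifold; let $\mathcal C$ be the coarse face structure of $P$ with respect to which $L$ is Sperner (so all $n$ coarse vertices $v_1,\dots,v_n$ lie in $B$), and let $K=\mathcal C|_B$ be the induced structure on $B$. The Sperner condition is precisely what forces, for each simplex of $T$ lying in $B$, its label set to span a face of $K$; hence $L$ induces a simplicial self-map $g\colon B\to|K|=B$ carried by the identity (every face $F$ of $K$ satisfies $g(F)\subseteq F$), so $g\simeq\mathrm{id}_B$. As $B$ is a closed $(d-1)$-manifold it has no $d$-chains, so the mod-$2$ pushforward under $g$ of the fundamental cycle of $T|_B$ is a $(d-1)$-cycle homologous, via $g\simeq\mathrm{id}$, to the fundamental cycle of $B$, and therefore equal to it; comparing the coefficient of each facet $\sigma$ of $K$ gives (i): the number of $(d-1)$-simplices of $T$ in $B$ with label set $V(\sigma)$ is odd. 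Secondly, apply the mod-$2$ combinatorial Stokes formula on the manifold-with-boundary $P$ to the $(d-1)$-cochain equal to $1$ exactly on those simplices fully labelled by a prescribed $d$-set $S$; its coboundary on a $d$-simplex $\tau$ equals $1$ precisely when $\tau$ is fully labelled with $S\subseteq L(\tau)$, which gives (ii): for every $d$-subset $S$, the number of fully labelled $d$-simplices $\tau$ of $T$ with $S\subseteq L(\tau)$ has the same parity as the number of $(d-1)$-simplices of $T$ in $B$ with label set $S$. Combining (i) and (ii): for every facet $\sigma$ of $K$ there is a fully labelled $d$-simplex $\tau_\sigma$ of $T$ with $V(\sigma)\subseteq L(\tau_\sigma)$; i.e.\ every $d$-set $V(\sigma)$, $\sigma$ a facet of $K$, is absorbed into some $(d+1)$-set genuinely realised by $T$.

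Next I would establish the weaker bound $n-d$ (which, when $P$ is a polytope, is Theorem~1.2) by induction on $n$. The base case $n=d+1$, where $P$ is a simplex, is Sperner's lemma (Theorem~1.1). For $n>d+1$, choose a coarse vertex $v$ whose deletion from $\mathcal C$ again gives a PL manifold $P'$ with boundary on $n-1$ coarse vertices — such a $v$ exists outside ``pyramidal'' configurations, which are pushed to the base case or handled directly. Retract $T$ across the star of $v$ to a triangulation $T'$ of $P'$ carrying an induced Sperner labelling in the labels $\{1,\dots,n\}\setminus\{L(v)\}$, arranged so that every label set realised by a fully labelled $d$-simplex of $T'$ is also realised in $T$. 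By induction $T'$, and hence $T$, carries at least $(n-1)-d$ fully labelled $d$-simplices with pairwise distinct label sets, none using $L(v)$; and (i) and (ii) applied inside the star of $v$ supply at least one further fully labelled $d$-simplex of $T$ whose label set contains $L(v)$, which is a new type, giving $n-d$ altogether.

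Finally I would sharpen this by arranging that a vertex $v_{i_0}$ with $d_{i_0}=\min_i d_i$ is among the peeled vertices and examining that step with more care. In $B$ the vertex $v_{i_0}$ meets exactly $d_{i_0}$ edges $[v_{i_0},v_k]$; since $B$ is a closed $(d-1)$-manifold each such edge lies in some facet of $K$, so by (i) and (ii) there is, for every neighbour $v_k$ of $v_{i_0}$ in $B$, a fully labelled $d$-simplex of $T$ whose label set contains both $i_0$ and $k$. A $(d+1)$-set containing $i_0$ contains at most $d$ of the $d_{i_0}$ neighbours of $v_{i_0}$, so these label sets comprise at least $\lceil d_{i_0}/d\rceil=\lceil\min_i d_i/d\rceil$ pairwise distinct types, all containing $i_0$; being disjoint from the $(n-1)-d$ types avoiding $i_0$ produced by the induction applied to $P'=P\setminus v_{i_0}$, they yield $(n-1)-d+\lceil\min_i d_i/d\rceil=n+\lceil\min_i d_i/d\rceil-d-1$ fully labelled $d$-simplices with pairwise different labellings, as claimed. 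The step I expect to be the main obstacle is the PL ``retraction/peeling'': replacing $T$ by a triangulation $T'$ of $P\setminus v$ that inherits a Sperner labelling and loses none of the types already produced, together with disposing of the pyramidal exceptions. By contrast, the boundary degree computation and the combinatorial Stokes formula are purely formal and use nothing about $P$ beyond its being a PL manifold with boundary carrying a polytopal-type coarse structure; in particular no embedding of $P$ into $\mathbb R^d$ is needed beyond what gives the notion of a Sperner labelling meaning.
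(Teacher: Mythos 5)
Note first that the paper contains no proof of this statement: it is quoted as Meunier's theorem from \cite{Meun}, and the author explicitly remarks in Section~3 that Meunier's argument is not the pebble-set argument and that extending it beyond the embedded case is open. So your proposal has to stand on its own, and it does not yet.

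Your preliminary facts (i) and (ii) are correct and are exactly the $\deg_2$ and ``doors-and-rooms'' parity machinery that the paper itself uses for Theorem 3.1 and Corollary 3.3 (modulo the small point that the boundary facets of the coarse structure need not be simplices, so (i) should really be phrased via $\cov$ as in the paper). Likewise, the observation that a single $(d+1)$-set containing $i_0$ can absorb at most $d$ of the $d_{i_0}$ boundary edges at $v_{i_0}$, hence at least $\lceil d_{i_0}/d\rceil$ distinct types through $i_0$ are realised, is sound and is genuinely the source of the extra summand. The gap is the peeling induction, which carries the entire combinatorial weight of the bound $n-d$ and is left as a black box. Concretely: (a) you never construct $P'$, $T'$, or the induced Sperner labelling --- deleting a coarse vertex of a PL manifold with boundary need not leave a manifold with a coarse structure on $n-1$ vertices, and the simplices of $T$ do not restrict to a triangulation of any such $P'$, so the only realistic move is to \emph{relabel} the vertices of $T$ carrying the label $i_0$ by a neighbouring label; but then (b) the fully labelled simplices supplied by the induction are simplices of $T$ whose \emph{original} label sets may well contain $i_0$, so the $(n-1)-d$ types ``avoiding $i_0$'' are not disjoint from the $\lceil d_{i_0}/d\rceil$ types containing $i_0$, and the final addition is unjustified. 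This is precisely the obstruction that forces De Loera--Peterson--Su to introduce pebble sets (which, as in Corollary 3.4 of the paper, only yields $n-d$) and Meunier to use a different, non-peeling argument. As written, your proposal establishes one fully labelled simplex per boundary facet and the $\lceil\min_i d_i/d\rceil$ types through a minimum-degree vertex, but not the claimed total.
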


\medskip



\subsection{Tucker's lemma}

\begin{figure}
\begin{center}

  \includegraphics[clip,scale=0.9]{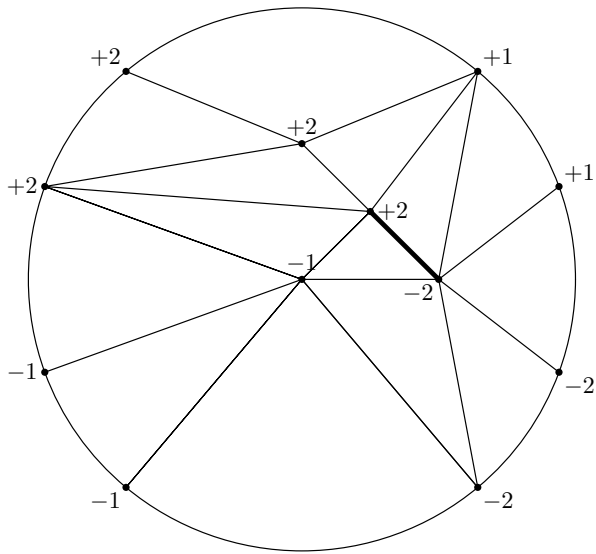}
\end{center}
\caption{A 2-dimensional illustration of Tucker's lemma}
\end{figure}

\medskip

Let $T$ be some triangulation of the $d$-dimensional ball ${\mathbb B}^d$. We call $T$ {\it antipodally symmetric on the boundary}  if the set of simplices of $T$ contained in the boundary of  ${\mathbb B}^d = {\mathbb S}^{d-1}$ is an antipodally symmetric triangulation of  ${\mathbb S}^{d-1}$, that is if $s\subset {\mathbb S}^{d-1}$ is a simplex of $T$, then $-s$ is also a simplex of $T$. 

\begin{theorem} {\bf (Tucker's lemma \cite{Tucker})} Let $T$ be a triangulation of  ${\mathbb B}^d$ that antipodally symmetric on the boundary. Let $$L:V(T)\to \{+1,-1,+2,-2,\ldots, +d,-d\}$$ be a labelling of the vertices of $T$ that satisfies $L(-v)=-L(v)$ for every vertex $v$ on the boundary. Then there exists an edge in $T$ that is {\bf complementary}, i.e. its two vertices are labelled by opposite numbers. 
\end{theorem}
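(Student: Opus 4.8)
The plan is to deduce Tucker's lemma from the Borsuk--Ulam theorem by contradiction: assuming that $L$ has \emph{no} complementary edge anywhere in $T$, I will manufacture an odd (antipodally equivariant) self-map of ${\mathbb S}^{d-1}$ that nevertheless extends continuously over ${\mathbb B}^d$, which is impossible. To set this up, position ${\mathbb B}^d$ with its centre at the origin, so that the antipodality on $\partial {\mathbb B}^d = {\mathbb S}^{d-1}$ is $x\mapsto -x$. Send each vertex $v$ of $T$ with $L(v)=+k$ to the standard basis vector $e_k\in{\mathbb R}^d$ and each vertex with $L(v)=-k$ to $-e_k$, and extend this vertex assignment affinely over every simplex of $T$; since the resulting affine maps agree on common faces, this yields a well-defined continuous map $f:{\mathbb B}^d\to{\mathbb R}^d$.

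The key step is the claim $0\notin f({\mathbb B}^d)$. Any point of ${\mathbb B}^d$ lies in some simplex $\sigma$ of $T$, and $f(\sigma)=\conv\{f(v_i):v_i\in\sigma\}$. Every pair of vertices of $\sigma$ spans an edge of $T$, so by the no-complementary-edge hypothesis no two of the vectors $f(v_i)$ are antipodal; hence the basis indices occurring among the $f(v_i)$ are pairwise distinct, and any convex combination $\sum_i t_i f(v_i)$ has its $k_i$-th coordinate equal to $\pm t_i$, so it can vanish only if every $t_i=0$ --- impossible for a convex combination. Thus $f$ lands in ${\mathbb R}^d\setminus\{0\}$, and $g:=f/\lVert f\rVert:{\mathbb B}^d\to{\mathbb S}^{d-1}$ is continuous.

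Finally I would exploit the boundary symmetry. Because $T$ is antipodally symmetric on $\partial{\mathbb B}^d$ and $L(-v)=-L(v)$ there, the affine extension satisfies $f(-x)=-f(x)$ for all $x\in{\mathbb S}^{d-1}$, so $g|_{{\mathbb S}^{d-1}}:{\mathbb S}^{d-1}\to{\mathbb S}^{d-1}$ is an odd map. By Borsuk's theorem (a standard corollary of Borsuk--Ulam) an odd self-map of ${\mathbb S}^{d-1}$ has odd, in particular nonzero, degree and is therefore not null-homotopic; but $g|_{{\mathbb S}^{d-1}}$ is the restriction of the map $g$ defined on all of ${\mathbb B}^d$, hence it is null-homotopic. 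This contradiction forces a complementary edge to exist.

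I expect the only genuinely delicate point to be the verification that $0\notin f(\sigma)$, together with the remark that the two hypotheses on $T$ --- antipodal symmetry of the boundary triangulation and $L(-v)=-L(v)$ --- are precisely what make $g|_{{\mathbb S}^{d-1}}$ honestly antipodal rather than merely antipodal on vertices. A self-contained alternative, staying purely combinatorial, is the classical ``doors-and-rooms'' path-following argument: one designates a suitable family of almost-complementary simplices, makes them the rooms of a graph of maximum degree two, and runs a parity count by induction on $d$ with the $1$-dimensional interval as base case; there the real obstacle is organizing the induction across the boundary sphere, which is why I would favour the topological reduction above.
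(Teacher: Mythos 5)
Your reduction to the Borsuk--Ulam theorem via the crosspolytope map $v\mapsto \pm e_{|L(v)|}$ is exactly the mechanism this paper uses: it is the map $f_L=f_{L,C^d}$ in the proof of Theorem 4.1, and the ball case is its Corollary 4.2, which passes to a closed manifold by doubling along the boundary rather than by your extension/null-homotopy argument --- the two devices are interchangeable here. One slip to repair: from ``no two of the $f(v_i)$ are antipodal'' you cannot conclude that the basis indices occurring among the $f(v_i)$ are pairwise distinct, since several vertices of a simplex $\sigma$ may carry the \emph{same} label; the correct observation is that, absent complementary edges, all vertices of $\sigma$ labelled $\pm k$ carry the same sign, so the $k$-th coordinate of $\sum_i t_i f(v_i)$ equals $\pm$ the sum of the corresponding $t_i$, and vanishing of all coordinates still forces every $t_i=0$. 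With that adjustment the argument is complete.
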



Consider also the following version of Tucker's lemma:
\begin{theorem}
Let $T$ be a centrally symmetric triangulation of the sphere ${\Bbb S}^d$. Let $$L:V(T)\to \{+1,-1,+2,-2,\ldots, +d,-d\}$$ be an equivariant (or Tucker's) labelling, i.e.   $L(-v)=-L(v)$). Then there exists a complementary edge.
\end{theorem}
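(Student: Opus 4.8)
\medskip
\noindent\emph{Sketch of the intended argument.} The plan is to reduce this to the ball version of Tucker's lemma stated above, by cutting the sphere along an equatorial subcomplex into two antipodal $d$-balls. First one normalizes the triangulation: $T$ may be replaced by any centrally symmetric subdivision $T'$, extending the labelling by giving each new vertex $w$ the label $L(a)$ of some vertex $a$ of the simplex of $T$ whose relative interior contains $w$, the choices made equivariantly so that $L(-w)=-L(w)$ persists. This does not affect the conclusion: if $T'$ has a complementary edge $\{w_1,w_2\}$, then $w_1,w_2$ lie in a common simplex $\rho$ of $T$, their labels are $L(a_1),L(a_2)$ for vertices $a_1,a_2$ of $\rho$, and $L(a_1)=-L(a_2)$ forces $a_1\neq a_2$, so $\{a_1,a_2\}$ is a complementary edge of $T$. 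Hence we may take $T$ as fine as convenient.

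Next one cuts: choose a centrally symmetric subcomplex $C\subset T$ homeomorphic to ${\mathbb S}^{d-1}$ separating ${\mathbb S}^d$ into two open regions whose closures are $d$-balls $B$ and $-B$ with $\partial B=\partial(-B)=C$. Then $T|_B$ is a triangulation of a $d$-ball with boundary $T|_C$; since $-C=C$, every boundary simplex $s$ of $T|_B$ has $-s$ also a simplex of $T|_B$, so $T|_B$ is antipodally symmetric on the boundary, and $L$ restricted to $V(C)$ satisfies $L(-v)=-L(v)$. The ball version of Tucker's lemma therefore applies to $(B,T|_B,L|_{V(B)})$ and produces a complementary edge, which is a complementary edge of $T$.

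A soft topological alternative avoids cutting: assuming there were no complementary edge, send the label $+k$ to the standard basis vector $e_k$ and $-k$ to $-e_k$ and extend affinely over the simplices of $T$, obtaining a continuous $\lambda\colon{\mathbb S}^d\to{\mathbb R}^d$. Absence of a complementary edge means each simplex is mapped into a proper face of the cross-polytope $\conv\{\pm e_1,\dots,\pm e_d\}$, so $0\notin\lambda({\mathbb S}^d)$; and $\lambda$ is antipodal because $T$ is centrally symmetric and $L(-v)=-L(v)$. Then $x\mapsto\lambda(x)/\|\lambda(x)\|$ is an antipodal map ${\mathbb S}^d\to{\mathbb S}^{d-1}$, contradicting the Borsuk--Ulam theorem.

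The main obstacle lies in the first (combinatorial) route: producing the equivariant ``equatorial'' decomposition ${\mathbb S}^d=B\cup(-B)$, $B\cap(-B)=C$. One must show that a fine enough centrally symmetric triangulation of ${\mathbb S}^d$ carries an antipodally symmetric $(d-1)$-sphere subcomplex bounding a fundamental ball for the antipodal action; this is a combinatorial--topological statement about the action alone, independent of the labelling, and the Borsuk--Ulam content of the theorem is then imported entirely through the ball version of Tucker's lemma. In the topological route there is essentially nothing to check beyond invoking Borsuk--Ulam.
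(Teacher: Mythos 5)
Your ``soft topological alternative'' is a complete and correct proof, and it is essentially the paper's own approach: the paper treats this statement as the special case $M={\mathbb S}^d$ of Theorem \ref{TBUT}, whose proof is exactly your construction --- send $\pm k$ to $\pm e_k$, extend simplicially to an antipodal map into the crosspolytope $C^d$, observe that the absence of a complementary edge forces every simplex into a proper face of $C^d$ so that $0$ is missed, and contradict the Borsuk--Ulam property. So on that route there is nothing to add. Your first, combinatorial route (cutting ${\mathbb S}^d$ along an equivariant equatorial subcomplex and invoking the ball version of Tucker's lemma) is not what the paper does, and as you yourself note it rests on an unproved step --- the existence, after equivariant subdivision, of an antipodally symmetric $(d-1)$-sphere subcomplex bounding a fundamental ball; your subdivision-and-relabelling reduction is fine, but without that decomposition lemma the first route is incomplete, so the proof should be read as resting on the Borsuk--Ulam argument.
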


\medskip

Tucker's lemma was extended by Ky Fan \cite{KyFan}:
\begin{theorem}
 Let $T$ be a centrally symmetric triangulation of the sphere ${\Bbb S}^d$. Suppose that each vertex $v$ of $T$ is assigned a label $L(v)$ from $\{\pm1,\pm2,\ldots,\pm n\}$ in such a way that $L(-v)=-L(v)$. Suppose this labelling does not have complementary edges. Then there are an odd number of $d$-simplices of $T$ whose labels are of the form $\{k_0,-k_1,k_2,\ldots,(-1)^dk_d\}$, where $1\le k_0<k_1<\ldots<k_d\le n$. In particular, $n\ge d+1$.
\end{theorem}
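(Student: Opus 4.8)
The plan is to prove Ky Fan's theorem by induction on $d$, using a combinatorial path–following (``door–to–door'') argument of the same flavour that underlies the classical proofs of Sperner's and Tucker's lemmas. The base case $d=0$ is immediate: $\mathbb S^0$ consists of two points $v,-v$ and no edges, and since $L(-v)=-L(v)$ exactly one of them carries a positive label, so there is exactly one ``alternating $0$-simplex'', an odd number. Throughout, call a $j$-simplex of $T$ \emph{positive alternating} if its vertices can be ordered so that the absolute values of their labels \emph{strictly} increase and the signs read $+,-,+,\ldots$; by the no--complementary--edge hypothesis no simplex has two vertices with opposite labels, so this matches the form $\{k_0,-k_1,\ldots,(-1)^jk_j\}$ with $k_0<k_1<\cdots<k_j$ in the statement. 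The quantity to control is $a_d$, the number of positive alternating $d$-simplices, and the goal is $a_d\equiv1\pmod 2$; the assertion $n\ge d+1$ then follows since such a simplex uses $d+1$ distinct labels.

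The combinatorial heart is a local count. Declare a $(d-1)$-simplex to be a \emph{door} if it is positive alternating. The key lemma is: every $d$-simplex $\sigma$ of $T$ has $0$, $1$, or $2$ doors among its $d+1$ facets, and it has exactly one door precisely when $\sigma$ itself is positive alternating. The ``only if'' direction is where the no--complementary--edge condition is used: one checks, according to the sign of the ``extra'' label of $\sigma$ relative to a given door and to where its absolute value falls among the labels of that door, that deleting an intermediate vertex either destroys the alternation or is ruled out, so no third door can appear and a single door forces $\sigma$ to be positive alternating. Since $T$ triangulates the closed manifold $\mathbb S^d$, every $(d-1)$-simplex lies in exactly two $d$-simplices; hence the graph $G$ whose nodes are the $d$-simplices of $T$ and whose edges are the doors (each joining its two cofaces) has maximum degree $2$. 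Thus $G$ is a disjoint union of paths and cycles, and its degree-one nodes are exactly the positive alternating $d$-simplices.

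The delicate point, and the main obstacle, is the \emph{parity}: handshaking inside $G$ by itself only yields that $a_d$ is even, which is vacuous. Getting oddness forces an essential use of the antipodal symmetry --- this is the combinatorial shadow of the degree argument behind the Borsuk--Ulam theorem. The plan is to run the path argument not on all of $\mathbb S^d$ but on a fundamental domain: refine $T$ symmetrically, if needed, so that $\mathbb S^d$ is cut along a symmetrically triangulated equatorial $\mathbb S^{d-1}$ into two antipodal $d$-balls $B$ and $-B$, and follow the doors inside $B$ only. The ends of the resulting path system are of two kinds: positive alternating $d$-simplices in the interior of $B$, and ``boundary ends'' on $\partial B=\mathbb S^{d-1}$, whose number is governed by the positive alternating $(d-1)$-simplices of $L$ restricted to this equator. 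That equatorial restriction is again Ky Fan data in dimension $d-1$ (it inherits the symmetry and the absence of complementary edges), so by the inductive hypothesis the number of its positive alternating $(d-1)$-simplices is odd; pairing $B$ with $-B$ via antipodality, the equatorial contributions match up and the interior count on $\mathbb S^d$ comes out odd, i.e.\ $a_d\equiv1\pmod2$.

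I expect two technical pressure points. The first is the bookkeeping in the door lemma: the sign-and-position case analysis is routine but easy to get wrong (I would sanity-check it by hand for $d=1,2$). The second, and the genuine difficulty, is making the fundamental-domain induction airtight --- arranging a symmetric equatorial subcomplex, checking that cutting along it creates no complementary edges, verifying that the restriction to the equator really is admissible lower-dimensional data, and confirming that the mod-$2$ count of path endpoints in $B$ produces exactly the claimed congruence rather than some correction term. An alternative that avoids the equator cut is to pass to the quotient $\mathbb{RP}^d$, carrying the sign information as a $\mathbb Z_2$-cocycle; there the oddness of $a_d$ becomes a nonzero mod-$2$ characteristic number, but the work it takes is essentially the same, merely repackaged in topological language.
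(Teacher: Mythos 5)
Your key ``door lemma'' is false as stated, and the counterexample already appears at $d=1$. Take an edge of $T$ with labels $+5$ and $-2$: it has no complementary pair, exactly one facet that is a door (the vertex labelled $+5$), and yet it is \emph{not} positive alternating --- ordered by increasing magnitude its signs read $-,+$. In general, a \emph{negative} alternating $d$-simplex (labels $-k_0,+k_1,-k_2,\ldots$ with $k_0<k_1<\cdots<k_d$) also has exactly one positive alternating facet, namely the one obtained by deleting the vertex of smallest magnitude; deleting the largest gives a negative alternating facet and deleting any intermediate vertex destroys the alternation. So the degree-one nodes of your graph $G$ are \emph{all} alternating $d$-simplices, of both signs. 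Since the antipodal map pairs the positive ones with the negative ones, the handshake count gives $2a_d\equiv 0\pmod 2$, which is vacuous --- the parity obstruction you flagged is not a separate difficulty but a direct consequence of the door lemma being wrong. A correct local count (as in Prescott--Su) must track the sign of alternating simplices and work on a hemisphere, where ``doors'' are defined relative to a fixed side of the equator.

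The second problem is that the step carrying all the content --- the fundamental-domain induction --- is only described, not performed. You would need: (i) an antipodally symmetric equatorial $(d-1)$-subcomplex of $T$, which a general centrally symmetric triangulation does not contain, and a symmetric refinement together with a relabelling of the new vertices that provably creates no complementary edges and does not change the parity of $a_d$; (ii) the precise accounting of how path endpoints in one hemisphere split between interior alternating $d$-simplices and equatorial $(d-1)$-simplices, including the sign bookkeeping above; and (iii) the verification that the inductive hypothesis applied to the equator yields the claimed odd count rather than an even correction. None of this is routine; it is exactly where Tucker's and Fan's lemmas resist the naive handshake argument. For comparison, the paper proves this statement by an entirely different, topological route: it builds an ACS polytope from points on a convex curve (so that, by the Radon-partition/alternation characterization of Theorem \ref{convcurve}, the simplices covering the origin are precisely the alternating ones) and then invokes the mod-2 degree of the antipodal simplicial map $f_{L,P}$ on the BUT manifold ${\Bbb S}^d$, which is odd; that argument delivers the odd count without any path-following.
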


In this paper we consider extensions of the Sperner, De Loera - Petersen - Su, Tucker and Fan theorems for manifolds. We show that for all cases $d$-balls and spheres can be substituted by $d$-manifolds with or without boundary. 
  

\section{Preliminaries}

Throughout this paper we consider manifolds that admit triangulations. The class of such manifolds  is called  {piecewise linear (PL)  manifolds.} Note that  a smooth manifold can be triangulated, therefore it is also a PL manifold. However, there are topological manifolds  that do not admit a triangulation. 

A {\em topological manifold} is a topological space that resembles Euclidean space near each point. More precisely, each point of a $d$-dimensional manifold has a neighbourhood that is homeomorphic to the Euclidean space of dimension $d$. A compact manifold without boundary is called {\em closed}. If a manifold contains its own boundary, it is called a {\it manifold with boundary.}

{\it Smooth manifolds} (also called {\it differentiable manifolds}) are manifolds for which overlapping charts ``relate smoothly'' to each other, meaning that the inverse of one followed by the other is an infinitely differentiable map from Euclidean space to itself.

$M$ is called a {\it piecewise linear (PL) manifold} if it is a topological manifold together with a piecewise linear structure on it. 
Every PL manifold $M$ admits a {\it triangulation:} that is, we can find a collection of simplices $T$ of dimensions $0, 1,\ldots, d$, such that (1) any face of a simplex belonging to $T$ also belongs to $T$, (2) any nonempty intersection of any two simplices of $T$ is a face of each, and (3) the union of the simplices of $T$ is $M$. (See details in \cite{Bryant}.) 

Note that the circle is the only one-dimensional closed manifold.  Closed manifolds in two dimensions are completely classified. (See details and proofs in \cite{ST}.) An orientable two-manifold (surface) is the sphere or the connected sum of $g$ tori, for $g \geq 1$. 
For any positive integer $n$, a distinct nonorientable surface can be produced by replacing $n$ disks with  M\"obius bands. In particular, replacing one disk with a  M\"obius band produces the real projective plane and replacing two disks produces the Klein bottle. The sphere, the $g$-holed tori, and this sequence of nonorientable surfaces form a complete list of compact, boundaryless two-dimensional manifolds.

\medskip  

\noindent{\bf Example 2.1}.  The real projective plane, ${\Bbb RP}^2$, can be viewed as the union of a M\"obius band and a disc. The correspondent model of the M\"obius band is shown in Fig. 3. Note that this model cannot be embedded to ${\Bbb R}^3$.

\medskip

\begin{figure}
\begin{center}

  \includegraphics[clip,scale=0.9]{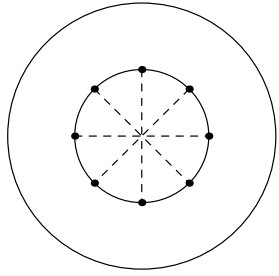}
\end{center}
\caption{M\"obius band.  Diametrically opposite points of the inner boundary circle are to be identified. The outer circle is the  boundary of the M\"obius band. }
\end{figure}

\medskip

Let $T$ be a triangulation of a PL manifold $M$. Then $T$ is a simplicial complex. The vertex set of $T$, denoted by $V(T)$ is the union of the vertex sets of all simplices of $T$. 

Given two triangulations $T_1$ and $T_2$ of two PL manifolds $M_1$ and $M_2$. A {\it simplicial} map is a function $f:V(T_1)\to V(T_2)$ that maps the vertices of $T_1$ to the vertices of $T_2$ and that has the property that for any simplex (face) $s$ of $T_1$, the image set $f(s)$  is a face of $T_2$.

Note that the original Brouwer proof of his fixed point theorem that 
is based on the concept of the degree of a continuous mapping. Let $f:M_1\to M_2$ be a continuous map between two closed manifolds $M_1$ and $M_2$ of the same dimension. Intuitively, the degree  is a number that represents the number of times that the domain manifold wraps around the range manifold under the mapping. Then $\dg2(f)$ (the degree modulo 2) is 1 if this number is odd and 0 otherwise.

It is well known that the degree of a continuous map $f$ of a closed manifold to a manifold is a topological invariant modulo 2 (see, for instance, \cite{Milnor} and \cite[pp. 44--46]{Mat}). Therefore, the degree of $f$ is odd if any  generic point in the range of the map has an odd number of preimages.

Now we define $\dg2(f)$ more rigorously. Let $T_1$ be a triangulation of a closed  $d$-dimensional PL manifold $M_1$. Suppose that $T_2$ is a triangulation of a  $d$-dimensional PL manifold $M_2$. (We do not assume that $M_2$ is closed.) Let $f:V(T_1)\to V(T_2)$ be a simplicial map. 
Consider any $d$-simplex $s$ of $T_2$. Denote by $m$ the number of preimages of $s$ in $T_1$. Then $\dg2(f)=1$ if $m$ is odd and $\dg2(f)=0$ if $m$ is even. Since the parity of $m$ does not depend on $s$, the {\it degree of map modulo 2} is well defined. Thus, the degree of a continuous map of a closed manifold to a manifold is a topological invariant modulo 2. 

  Let $f:M_1\to M_2$ be a continuous map between two manifolds $M_i$ with $d_1:=\dim(M_1)\ge d_2:=\dim(M_2)$.   Then for a point $y\in M_2$ the map $f$ is called {\it transversal to $y$}  (or {\it generic with respect to $y$)} if there are  open sets $U_i\subset M_i$ such that $U_2$ contains $y$,  $U_2=f(U_1)$ and $U_1=f^{-1}(U_2)$. 
 In the case $M_2= {\Bbb R}^{d_2}$ and $y=0$, where $0$ is the origin of ${\Bbb R}^{d_2}$,  $f$ is called {\it transversal to zero}.  

\medskip  

 Let  $M$ be a closed PL-manifold. A simplicial map $A:M\to M$ is called  a {\it free involution} if  $A(A(x))=x$ and $A(x)\ne x$ for all $x\in M$. A triangulation $T$ of $M$ is called {\it antipodal} or {\it equivariant} if $A:T\to T$ is a simplicial map. Let us call a pair $(M,A)$, where $A$ is a free simplicial involution as ${\Bbb Z}_2$-manifold.

\medskip  

\noindent{\bf Example 2.2}. It is clear that $({\Bbb S}^d,A)$ with $A(x)=-x$ is a  ${\Bbb Z}_2$-manifold.   Suppose that $M$ can be represented as a connected sum $N\# N$, where $N$ is a closed PL manifold. Then 
 admits a free involution. Indeed, $M$ can be  ``centrally symmetric" embedded to ${\Bbb R}^k$ with some $k$ and the antipodal symmetry $x\to -x$ in ${\Bbb R}^k$ implies a free involution $T:M\to M$ \cite[Corollary 1]{Mus}. For instance,   orientable  two-dimensional  manifolds $M^2_g$ with even genus $g$ and non-orientable manifolds $P^2_m$ with even $m$, where $m$ is the number of  M\"obius bands,  are ${\Bbb Z}_2$-manifolds.

\medskip

\begin{figure}
\begin{center}

  \includegraphics[clip,scale=0.9]{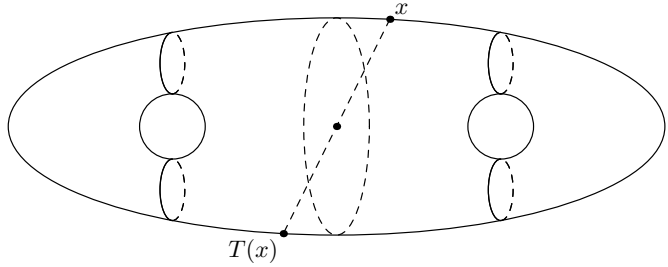}
\end{center}
\caption{The double torus that is centrally symmetric embedded to ${\Bbb R}^3$.  }
\end{figure}

\medskip
 
Suppose that $M$ admits a free simplicial involution $A$. We say that a map $f:M \to {\Bbb R}^d$  is {\it antipodal} (or equivariant) if $f(A(x))=-f(x)$.

\section{Extensions of Sperner's lemma for manifolds}

 A $d$-simplex $S$ where each corner is labelled between 1 and $d+1$ such that all labels are used exactly once is called fully labelled. Suppose that points are added in $S$, then it may be  triangulated, i.e. subdivided into smaller $d$-simplices such that none of the smaller simplices contain any points: all the points are corners of smaller simplices. This subdivision may be done in many ways.

Now label all the interior points according to the following rule: an interior point that is on a facet of the simplex must be given one of the labels of one of the corners of that facet.  The result is called a Sperner labelling.

Note that in this definition an interior point of $S$ can be labelled by any label. So Sperner's constraint is only for the boundary of $S$ that is homeomorphic to ${\Bbb S}^{d-1}$. Let us extend this definition to any closed manifold. 

\medskip 

\noindent{\bf Definition.} 
Let $K$ be a closed  $m$-dimensional PL manifold with vertices $V=\{ v_1,\ldots,v_n\}$ and  faces $\{ F_i\}$ of dimension from 1 to $m$. 
Let $T$ be a triangulation of $K$ such that for any face (that is a simplex) $F_i$ it is a triangulation of $F_i$. Suppose that the vertices of $T$ have a labelling satisfying the following conditions: each vertex $v_k$ of $V$ is assigned a unique label from $\{ 1,2,\ldots,n\}$, and each other vertex $v$ of $T$ belonging to a face $F_i$ with vertices $V(F_i):=\{ v_{i_1},\ldots,v_{i_\ell}\}$ from $V$ is assigned a label of one of the vertices of $V(F_i)$. Such a labelling is called a {\it Sperner labelling} of $T$. 

\medskip 

\noindent{\bf Definition.} We say that a $d$-simplex is a {\it fully labelled} cell or simply a {\it full cell} if all its labels are are distinct. 

Let $T$ be a triangulation of a $d$-dimensional PL manifold. Let $L:V(T)\to\{1,\ldots,n\}$ be a labelling of $T$. Let $Q$ be a set of $(d+1)$-subsets of $\{1,\ldots,n\}$. We denote by $\fc(L,T,Q)$ the number of fully labelled cells that are labelled as labels in $Q$.  
In the case $n=d+1$ we denote $\fc(L,T):=\fc(L,T,\{1,\ldots,d+1\})$.




\medskip

Let $P$ be  a set  of $n$ points $p_1,\ldots,p_n$ in  ${\Bbb R}^d$. Denote by $S(P)$ the collection of all simplices spanned  by vertices $\{p_{i_1},\ldots,p_{i_k}\}$ with $1\le k\le d+1$. Consider a point  $x\in{\Bbb R}^d$  and the set $S_x(P)$ of all simplices from $S(P)$ which cover $x$. If no such simplices exist, we write  $S_x(P)=\emptyset$. 
Denote this set of simplices by $\cov_P(x)$ or just  by $\cov(x)$. 

\medskip

\noindent{\bf Example 3.1}. Let $P$ be a pentagon, see Fig. 5.  Then 
$$ 
\cov(p_1)=(123)\cup(124)\cup(125); \; 
\cov(p_2)=(135)\cup(145)\cup(235)\cup(245); \; 
$$
$$ 
\cov(p_3)=(134)\cup(234)\cup(345); \; 
\cov(O)=(124)\cup(134)\cup(135)\cup(235)\cup(245); \; 
$$

\medskip

\begin{figure}
\begin{center}

  \includegraphics[clip,scale=1.2]{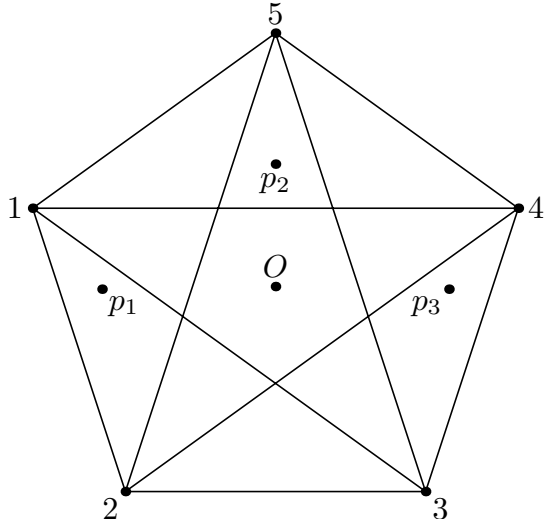}
\end{center}
\caption{Pebbles and $\cov(x)$ for a pentagon}
\end{figure}




\medskip

\noindent{\bf Definition.} Let $P:=\{p_1,\ldots,p_n\}$ be points in ${\Bbb R}^d$. Let  $T$ be a triangulation of a closed PL manifold $M$ of dimension $m$. Let $L$ be an {\it $n$-labelling} of $T$, i.e. a labelling (map) $L:V(T)\to\{1,2,\ldots,n\}$. 
If for  $v\in V(T)$ we have $L(v)=i$, then set  $f_{L,P}(v):=p_i$. Therefore, $f_{L,P}$ is defined for all vertices of $T$, and it uniquely defines a simplicial (piecewise linear) map $f_{L,P}:M\to {\Bbb R}^d$.

\begin{theorem} \label{SpM} Let $P:=\{p_1,\ldots,p_n\}\subset{\Bbb R}^d$. Suppose  $x\in{\Bbb R}^d$ is such that $\cov_P(x)$ consists of $d$-simplices. Let $M$ be a a closed PL $d$-dimensional manifold. Then any  $n$-labelling $L$ of a triangulation $T$ of  $M$ must contain an even number of full cells which are labelled as simplices in $\cov_P(x)$. 
\end{theorem}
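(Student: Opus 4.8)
\medskip

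\noindent{\bf Proof sketch.} Write $f:=f_{L,P}:M\to{\Bbb R}^d$ for the piecewise linear map induced by the labelling, and let $Q$ denote the family of those $(d+1)$-element subsets of $\{1,\ldots,n\}$ which occur as vertex sets of the simplices in $\cov_P(x)$. By hypothesis every simplex in $\cov_P(x)$ is a genuine $d$-simplex, so $Q$ is exactly the set of label patterns in question and the quantity to be shown even is $\fc(L,T,Q)$. The plan has two parts: (i) identify $\fc(L,T,Q)$ with the number $\#f^{-1}(x)$ of preimages of $x$, and (ii) show that this number is even because $M$ is closed while the target ${\Bbb R}^d$ is contractible.

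For (i), the hypothesis on $x$ makes it a generic value of $f$, i.e.\ $f$ is transversal to $x$. Indeed, if $\tau$ is a face of $T$ with $\dim\tau\le d-1$ then $f(\tau)$ is the convex hull of at most $d$ of the points $p_i$, hence is contained in a simplex of $S(P)$ of dimension at most $d-1$; if $x$ lay in it, $\cov_P(x)$ would contain a non-$d$-dimensional simplex, a contradiction. Thus $f^{-1}(x)$ misses the $(d-1)$-skeleton of $T$. Now if $\sigma$ is a $d$-simplex of $T$ with $x\in f(\sigma)$, then $f|_\sigma$ must be an injective affine map (otherwise $f(\sigma)$ has dimension $\le d-1$ and the previous contradiction recurs); hence $\sigma$ is a full cell, $f(\sigma)$ is a $d$-simplex of $S(P)$ containing $x$ in its relative interior, so $f(\sigma)\in\cov_P(x)$ and $L(\sigma)\in Q$, and $\sigma$ contains a single point of $f^{-1}(x)$. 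Conversely, if $\sigma$ is a full cell with $L(\sigma)\in Q$ then $f(\sigma)$ is, by the definition of $Q$, a simplex of $\cov_P(x)$, so $x\in f(\sigma)$. Therefore $\fc(L,T,Q)=\#f^{-1}(x)$.

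For (ii), since $x$ is a generic value, $\#f^{-1}(x)$ is even precisely when $\dg2(f)=0$ (Section~2). The image $f(M)$ is compact, so translating it by a sufficiently large vector produces a map $f'$ with $x\notin f'(M)$, hence $\#(f')^{-1}(x)=0$ and $\dg2(f')=0$; as $f$ and $f'$ are homotopic and the mod-$2$ degree is a homotopy (topological) invariant, $\dg2(f)=0$. Equivalently, staying inside the simplicial framework of Section~2, one fixes a large $d$-ball $B\supset f(M)$ carrying a triangulation $T_2$, passes to a common subdivision so that $f$ becomes a simplicial map with $x$ lying in the interior of some $d$-simplex $s_0$ of $T_2$, and computes $\dg2(f)$ from a $d$-simplex of $T_2$ lying outside $f(M)$, which has no preimages. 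Either way $\fc(L,T,Q)=\#f^{-1}(x)$ is even.

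I expect the one delicate point to be the bookkeeping in part (i): one has to use the hypothesis ``$\cov_P(x)$ consists of $d$-simplices'' in full strength — to keep $x$ off the images of all faces of $T$ of dimension below $d$, and in particular off the facets of the image $d$-simplices — so that the preimages of $x$ are isolated points, one per full cell labelled by $Q$. Part (ii) is then just the classical vanishing of the mod-$2$ degree of a map from a closed $d$-manifold into Euclidean space; it is the absence of a boundary in $M$ that forces the count to be even here, whereas in the classical Sperner lemma for a ball the boundary sphere contributes a term of odd parity.
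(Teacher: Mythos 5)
Your argument is correct and follows the same route as the paper: identify the full cells labelled from $\cov_P(x)$ with the preimages $f_{L,P}^{-1}(x)$, then conclude evenness from $\dg2(f_{L,P})=0$, which the paper obtains by noting that a point outside the convex hull of $P$ has empty preimage. Your part (i) merely spells out the transversality bookkeeping that the paper leaves implicit.
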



\begin{proof} Consider $f_{L,P}:T\to {\Bbb R}^d$. It is easy to see that $\dg2(f_{L,P})=0$. 
Indeed, if $y\in{\Bbb R}^d$ lies outside of the convex hull of $P$ in ${\Bbb R}^d$, then $f^{-1}_{L,P}(y)=\emptyset$. Therefore, for any  point $x$ in ${\Bbb R}^d$ which is a regular value of $f_{L,P}$, we have  $|f^{-1}_{L,P}(x)|\equiv |f^{-1}_{L,P}(y)|\equiv 0 \pmod {2}$. Thus, the number of full cells which are labelled as simplices in $\cov_P(x)$ is even. 
\end{proof}	

For the classical case $n=d+1$ we have the following result (also see \cite{KyFan67}): 
\begin{cor} \label{Sp1} Let $T$ be a triangulation of a closed PL manifold $M^d$. Any $(d+1)$-labelling of $T$ 
must contain an even number of full cells.	
\end{cor}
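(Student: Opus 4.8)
The plan is to deduce the corollary directly from Theorem~\ref{SpM} by choosing the point configuration $P$ to be the vertex set of a single non-degenerate $d$-simplex. Concretely, pick any $d+1$ points $p_1,\dots,p_{d+1}$ in ${\Bbb R}^d$ that are affinely independent, so that $\conv(P)$ is a $d$-dimensional simplex, and let $x$ be an interior point of $\conv(P)$ lying on no proper face (for definiteness, the standard simplex and its barycenter work in every dimension). Then the only simplex of $S(P)$ that covers $x$ is the top-dimensional one spanned by all of $p_1,\dots,p_{d+1}$, so $\cov_P(x)=\{(1,2,\dots,d+1)\}$; in particular $\cov_P(x)$ consists of $d$-simplices, and the hypotheses of Theorem~\ref{SpM} are met with $n=d+1$.

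Next I would observe that, when $n=d+1$, being ``labelled as a simplex in $\cov_P(x)$'' is exactly the same condition as being a full cell in the sense of the corollary. Indeed, a $d$-simplex of $T$ has $d+1$ vertices, so it is fully labelled (all labels distinct) if and only if its labels are precisely $\{1,2,\dots,d+1\}$, i.e. if and only if it is labelled as the unique simplex $(1,\dots,d+1)$ of $\cov_P(x)$. Hence $\fc(L,T)=\fc(L,T,\cov_P(x))$ for every $(d+1)$-labelling $L$ of $T$.

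Finally, applying Theorem~\ref{SpM} to this $P$, this $x$, and the given triangulation $T$ of the closed PL manifold $M^d$, we conclude that $\fc(L,T,\cov_P(x))$ is even; combined with the identification of the preceding paragraph, $\fc(L,T)$ is even, which is the assertion of the corollary.

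I do not expect any genuine obstacle: all the topological content is already carried by Theorem~\ref{SpM} (the vanishing of $\dg2(f_{L,P})$ because $f_{L,P}$ misses every point outside $\conv(P)$). The only things that need checking are the two elementary facts used above — that an interior point of a non-degenerate $d$-simplex is covered by that simplex alone, and that with only $d+1$ labels available the two notions of ``full cell'' coincide — together with the trivial remark that a suitable $P$ and $x$ exist for every $d$, so the corollary is not vacuous.
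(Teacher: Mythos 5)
Your proposal is correct and is exactly the intended derivation: the paper states this corollary as the immediate special case $n=d+1$ of Theorem~\ref{SpM} without writing out the details, and your choice of $P$ as an affinely independent $(d+1)$-point set with $x$ its barycenter, together with the observation that with only $d+1$ labels a full cell must carry all of $\{1,\dots,d+1\}$, supplies precisely those details.
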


\begin{cor} \label{cor1} 
Let $M$ be a $d$-dimensional compact PL manifold  with boundary $B$. Let $B$ be PL homeomorphic to the boundary of a $d$-simplex (i.e. $B\cong{\Bbb S}^{d-1}$) with vertices $v_1,\ldots,v_{d+1}$. Then any $(d+1)$-labelling $L$ of a triangulation $T$ of $M$ such that $L(v_i)=i$ and $L$ is a Sperner labelling on the boundary $B$ must contain an odd number of full cells. 
\end{cor}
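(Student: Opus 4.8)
The plan is to reduce the statement to the closed case, Corollary~\ref{Sp1}, by capping $M$ off with a single $d$-simplex. Since the boundary $B$ of $M$ is PL homeomorphic to the boundary of a $d$-simplex, fix a $d$-simplex $\Delta$ with vertices $v_1,\dots,v_{d+1}$ (the same vertices that label $B$) and glue $\Delta$ to $M$ along a PL homeomorphism $B\cong\partial\Delta$; the result $\widehat M:=M\cup_B\Delta$ is a closed PL $d$-manifold. To triangulate it compatibly with $T$, pick an interior point $w$ of $\Delta$ and let $T'$ be the cone of the induced triangulation $T|_B$ from $w$; then $\widehat T:=T\cup T'$ is a triangulation of $\widehat M$. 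Extend the labelling by $\widehat L(w):=1$ (any label in $\{1,\dots,d+1\}$ will do), keeping $\widehat L=L$ on $V(T)$; this is a $(d+1)$-labelling of $\widehat T$.

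Two observations make the reduction work. First, every $d$-simplex of $\widehat T$ lies entirely in $M$ or entirely in $\Delta$, and no $d$-simplex is contained in the $(d-1)$-dimensional set $B$, so $\fc(\widehat L,\widehat T)=\fc(L,T)+\fc(\widehat L|_\Delta,T')$. Second, $\widehat L$ restricts to a Sperner labelling of the triangulated simplex $(\Delta,T')$ in the classical sense: a vertex of $T'$ in the relative interior of a proper face $G\subsetneq\Delta$ lies on $\partial\Delta=B$, hence is a vertex of $T|_B$, and is labelled by a vertex of $G$ because $L|_B$ is a Sperner labelling; the cone point $w$ lies in the interior of $\Delta$ and is therefore unconstrained. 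Now apply Corollary~\ref{Sp1} to $\widehat M$ to get that $\fc(\widehat L,\widehat T)$ is even, and apply Sperner's lemma to $(\Delta,T',\widehat L|_\Delta)$ — in its classical form, a Sperner-labelled triangulated $d$-simplex has an \emph{odd} number of fully labelled cells — to get that $\fc(\widehat L|_\Delta,T')$ is odd. Combining the two gives $\fc(L,T)\equiv 1\pmod 2$, as claimed.

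The one point to be careful about is that Corollary~\ref{Sp1} only records a parity, so the cap must genuinely supply an \emph{odd} contribution; thus we need the sharp form of Sperner's lemma (odd number of full cells), not merely the existence of a full cell as phrased in the statement of Sperner's lemma above — but this sharp form is classical. Alternatively, one can argue directly in the spirit of Theorem~\ref{SpM}: take $P=\{p_1,\dots,p_{d+1}\}$ to be the vertices of a geometric $d$-simplex $\sigma\subset{\Bbb R}^d$ and consider $f_{L,P}\colon M\to{\Bbb R}^d$. For a regular value $x$ in the interior of $\sigma$ one has $|f_{L,P}^{-1}(x)|=\fc(L,T)$, while $f_{L,P}^{-1}(x)=\emptyset$ when $x$ lies outside $\sigma$; the Sperner condition on $B$ forces $f_{L,P}(B)\subseteq\partial\sigma$, so the mod-$2$ count of preimages is locally constant across interior faces and jumps only across $f_{L,P}(B)\subseteq\partial\sigma$. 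Crossing $\partial\sigma$ through the facet opposite $p_{d+1}$, the jump equals, mod $2$, the number of $(d-1)$-cells of $T|_B$ labelled $\{1,\dots,d\}$, which is odd by Sperner's lemma one dimension down. Hence $\fc(L,T)\equiv 1\pmod 2$. I would present the capping argument as the main proof, since it is the shortest and isolates exactly where the classical (sharp) Sperner lemma enters.
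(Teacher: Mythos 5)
Your proof is correct and takes essentially the same route as the paper: cap $M$ off with a cone-triangulated $d$-simplex, apply Corollary~\ref{Sp1} to the resulting closed manifold, and show that the cap contributes an odd number of full cells. The only difference is that the paper makes this last step self-contained by induction on $d$ (reducing to the facet $v_2\cdots v_{d+1}$ of $B$), whereas you invoke the classical sharp form of Sperner's lemma --- legitimate, though it is exactly the special case $M=\Delta$ of the corollary being proved, so the paper's induction is the cleaner way to avoid any appearance of circularity.
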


\begin{figure}[h]
\begin{center}
  \includegraphics[clip]{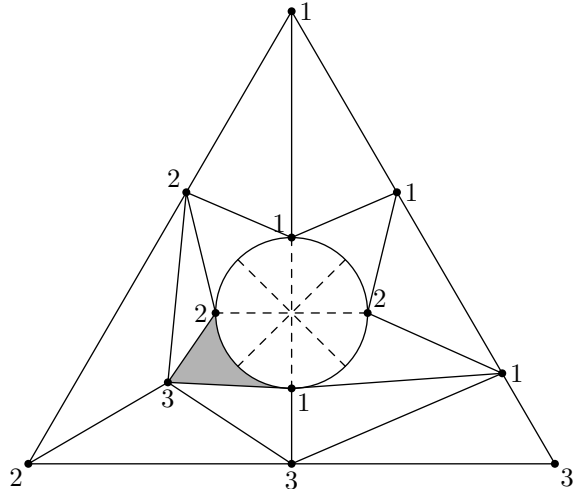}
\end{center}
\caption{Sperner's lemma for   the M\"obius band}
\end{figure}

\begin{proof} We prove this corollary by induction on $d$. It is clear for $d=1$. 
Let $S$ denote a $d$-simplex. Two manifolds $M$ and $S$ can be glued together along $B$. We denote the new manifold by $N$. Then $N$ is a closed manifold. Corollary \ref{Sp1} implies that any $(d+1)$-labelling of any triangulation of $N$ has an even number of full cells. 

  Let us add to the vertices of $T$ one more vertex $q$ that is an internal point of $S$.  Let $C=$cone$(T|_B)$ be the {\it cone triangulation} of $S$ with vertex $q$. (Here $T|_B$ denote the triangulation $T$ on $B$.) Actually, $C$ consists of simplices formed by the  union of all segments connecting the points of $B$ with $q$ and the boundary triangulation $T|_B$. Then we obtain the triangulation $\tilde T:=T\cup C$ of $N$. 

Consider the following labelling $\tilde L$ on $\tilde T$. Let $\tilde L(v):=L(v)$ for all $v\in V(T)$ and $\tilde L(q):=1$.  
Since  $\tilde T$ is a triangulation of $N$, we have that the number of full cells  $\fc(\tilde L,\tilde T)$ is even. 

By induction  the face $F=v_2\ldots v_{d+1}$ \, of $B$ has an odd number of full cells. Then $\fc(\tilde L,C)=\fc(L,T|_F,\{2,\ldots,d+1\})$ is odd. Note that $$\fc(\tilde L,\tilde T)=\fc(L,T)+\fc(\tilde L,C).$$ Thus  $T$ must contain an odd number of full cells. 	
\end{proof}





Note that for the case when $M$ is a $d$-simplex Corollary \ref{cor1} is Sperner's lemma. 

\medskip


Now we show how  the De Loera - Peterson - Su theorem follows from Theorem \ref{SpM}. 

\begin{cor} \label{cor2} Let $P$ be  a convex polytope in ${\Bbb R}^d$ with vertices $p_1,\ldots,p_n$.
Let $M$ be a compact $d$-dimensional PL manifold  with boundary $B$. Let $B$ be piecewise linearly  homeomorphic to the boundary of $P$. Suppose  $x\in{\Bbb R}^d$ is such that $\cov_P(x)$ consists of $d$-simplices. Then any $n$-labelling $L$ of a triangulation $T$ of $M$ that is a Sperner labelling on the boundary must contain an odd number of full cells which are labelled as simplices in $\cov_P(x)$. In other words, $\fc(L,T,\cov(x))$ is odd. 
\end{cor}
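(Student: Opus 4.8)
The plan is to mimic the gluing argument from the proof of Corollary~\ref{cor1}, replacing the simplex $S$ by a cone over $\partial P$. First I would glue a copy of $P$ to $M$ along $B$: since $B$ is PL homeomorphic to $\partial P$, the result is a closed PL $d$-manifold $N = M \cup_B P$. By Theorem~\ref{SpM}, for the point $x$ with $\cov_P(x)$ consisting of $d$-simplices, any $n$-labelling of any triangulation of $N$ contains an even number of full cells labelled as simplices in $\cov_P(x)$, i.e. $\fc(\tilde L, \tilde T, \cov(x))$ is even for the appropriate extension.

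Next I would triangulate the glued-in copy of $P$ by a cone: add one interior vertex $q$ to $P$, and let $C = \mathrm{cone}(T|_B)$ be the cone triangulation with apex $q$ over the boundary triangulation $T|_B$. This gives $\tilde T = T \cup C$, a triangulation of $N$ extending $T$. Extend the labelling by setting $\tilde L(v) = L(v)$ for $v \in V(T)$ and choosing $\tilde L(q) = j$ for a suitable fixed label $j \in \{1,\ldots,n\}$ — concretely, pick $q$ to be the point $p_j$ viewed inside $P$ and set $j$ so that the resulting map $f_{\tilde L, P}$ on $C$ is (a subdivision of) the identity-like map on $P$. Since $\fc(\tilde L, \tilde T, \cov(x)) = \fc(L, T, \cov(x)) + \fc(\tilde L, C, \cov(x))$ and the left side is even, it remains to show $\fc(\tilde L, C, \cov(x))$ is odd; then $\fc(L,T,\cov(x))$ is odd, as claimed.

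The heart of the matter — and the step I expect to be the main obstacle — is computing $\fc(\tilde L, C, \cov(x))$, the number of full cells of the cone triangulation whose label sets lie in $\cov_P(x)$. Because $\tilde L$ on $C$ is a Sperner labelling of a cone over $\partial P$ with apex labelled $j$, the simplicial map $f_{\tilde L,P} : C \to {\Bbb R}^d$ is, up to subdivision, the piecewise-linear homeomorphism $P \to P$ carrying the cone structure to a genuine triangulation of $P$ refining its face structure; hence it has degree $1$ (mod $2$) at the regular value $x$, which lies in the interior of $P = \conv(P)$. The preimages of $x$ under this map are exactly the full cells of $C$ covering $x$, and a full cell covers $x$ precisely when its label set belongs to $\cov_P(x)$. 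Therefore $\fc(\tilde L, C, \cov(x)) = |f_{\tilde L,P}^{-1}(x)| \equiv 1 \pmod 2$.

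Finally, combining the two parity facts gives $\fc(L,T,\cov(x)) = \fc(\tilde L,\tilde T,\cov(x)) - \fc(\tilde L, C, \cov(x)) \equiv 0 - 1 \equiv 1 \pmod 2$, so $T$ contains an odd number of full cells labelled as simplices in $\cov_P(x)$. To recover the De Loera--Petersen--Su theorem one then applies this with $M = P$ itself and ranges over the finitely many relevant points $x$ (the vertices $p_i$ of $P$ that are not vertices of a common facet), each of which forces a distinct full cell; a counting argument as in \cite{DeLPS} yields the bound $n-d$. The one technical point requiring care is that $x$ must be a regular value of $f_{L,P}$ and of $f_{\tilde L,\tilde T}$ simultaneously, and that $\cov_P(x)$ contains only $d$-simplices — this is exactly the genericity hypothesis in the statement, so a small perturbation of $x$ (not of the triangulation) suffices, using that the conclusion is about a fixed combinatorial quantity independent of such perturbations.
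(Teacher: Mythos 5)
Your proposal follows the same skeleton as the paper's proof: glue $P$ to $M$ along $B$ to get a closed manifold $N$, triangulate the glued copy of $P$ as a cone $C$ over $T|_B$ with a labelled apex, invoke Theorem \ref{SpM} to get that $\fc(\tilde L,\tilde T,\cov(x))$ is even, and reduce everything to showing $\fc(\tilde L,C,\cov(x))$ is odd. Where you genuinely diverge is in that last step. The paper argues by induction on $d$: it draws the line through $p_1$ and $x$, lets $y$ be its second intersection with $\partial P$ lying in a facet $F=v_{\ell_1}\ldots v_{\ell_d}$, applies the inductive hypothesis to conclude $\fc(L,T|_F,\{\ell_1,\ldots,\ell_d\})$ is odd, and observes that the only label set from $\cov(x)$ realizable in $C$ is $\{1,\ell_1,\ldots,\ell_d\}$. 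You instead compute the mod-$2$ degree of the cone map $f_{\tilde L,P}:C\to P$ at the interior point $x$ directly; this is arguably cleaner and avoids both the induction and the line construction. One caution: your justification of the degree-$1$ claim is not right as stated. The map $f_{\tilde L,P}$ on $C$ is in general \emph{not} a PL homeomorphism of $P$, even up to subdivision (e.g.\ many boundary vertices of $T|_B$ may share a label, collapsing simplices). What is true, and suffices, is that the Sperner condition forces $f_{L,P}|_B:\partial P\to\partial P$ to map each face into itself, hence the straight-line homotopy to the identity stays in $\partial P$, so the boundary map has degree $1$; since $x$ lies in the interior of $P$ (forced by the hypothesis that $\cov_P(x)$ consists of $d$-simplices), the degree of the cone map at $x$ equals that boundary degree regardless of the apex label $j$, so no ``suitable'' choice of $j$ is needed. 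With that repair your argument is complete; the closing remarks about recovering the De Loera--Peterson--Su bound are extraneous to this corollary (the paper derives that separately via pebble sets).
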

\begin{proof} This corollary can be proved by similar arguments as Corollary \ref{cor1}. Indeed, two manifolds $M$ and $P$ can be glued together along $B$. We denote the new manifold by $N$. Then $N$ is a closed manifold.
	
Let $C:=$cone$(T|_B)$ be the cone triangulation of $P$ with vertex $q$, where $q$ is an internal point of $P$. Then we have the triangulation $\tilde T:=T\cup C$ of $N$. 

	Consider the following labelling $\tilde L$ on $\tilde T$. Let $\tilde L(v):=L(v)$ for all $v\in V(T)$ and $\tilde L(q):=1$.   Now we show that $\fc(\tilde L,C,\cov(x))$ is odd.
	
Consider the line in ${\Bbb R}^d$ passes through points $p_1$ and $x$. By assumptions, this line intersects the boundary $B$ of the polytope $P$ in two points $p_1$ and $y$, where $y$ is an internal point of some $(d-1)$-simplex of $T|_B$ with  distinct labels $\ell_1,\ldots,\ell_d$. Therefore, $y$ lies on the face $F=v_{\ell_1}\ldots v_{\ell_d}$.   By induction  $\fc(L,T|_F,\{\ell_1,\ldots,\ell_d)$ is odd. Note that $\tilde L$ on $C$ contains only one labelling $(1\ell_1\ldots\ell_d)$ from $\cov(x)$. Then  
$$\fc(\tilde L,C,\cov(x))=\fc(\tilde L,C,\{1,\ell_1,\ldots,\ell_d\})=\fc(L,T|_F,\{\ell_1,\ldots,\ell_d\})=1 \md 2).$$ 

We have $$\fc(\tilde L,\tilde T,\cov(x))=\fc(L,T,\cov(x))+\fc(\tilde L,C,\cov(x)),$$ where $\fc(\tilde L,\tilde T,\cov(x))$ is even and $\fc(\tilde L,C,\cov(x))$ is odd. Thus $\fc(L,T,\cov(x))$ is odd. 
\end{proof}

\begin{cor} Let $P$ be  a convex polytope in ${\Bbb R}^d$ with vertices $p_1,\ldots,p_n$.
Let $M$ be a compact $d$-dimensional PL manifold  with boundary $B$. Let $B$ be PL  homeomorphic to the boundary of $P$.  Then any $n$-labelling of a triangulation of $M$ that is a Sperner labelling on the boundary contains at least $n-d$  full cells.
\end{cor}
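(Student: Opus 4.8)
The plan is to derive this corollary from Corollary \ref{cor2} in exactly the way the De Loera--Petersen--Su theorem is derived from the polytopal Sperner lemma: by applying Corollary \ref{cor2} simultaneously at a whole family of test points and then counting. The key object is a \emph{pebble set} of $P$ in the sense of De Loera, Petersen and Su, i.e. a finite set $X\subset\mathrm{int}(P)$ in general position such that every $d$-simplex spanned by vertices of $P$ contains at most one point of $X$; in \cite{DeLPS} it is proved that every $d$-polytope with $n$ vertices admits a pebble set of cardinality $n-d$. First I would fix such a set $X=\{x_1,\dots,x_{n-d}\}$, chosen generically enough that for each $k$ the family $\cov_P(x_k)$ consists only of $d$-simplices (possible since the $x_k$ avoid the finitely many affine spans of $\le d$ vertices) and so that $\cov_P(x_k)\ne\emptyset$, which holds because $x_k\in\mathrm{int}(P)$ and the interior of $P$ is covered by the $d$-simplices of any triangulation of $P$ using only vertices of $P$.

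Next, for each $k$ I would apply Corollary \ref{cor2} with the test point $x_k$: since $L$ is a Sperner labelling on $B\cong\partial P$ and $\cov_P(x_k)$ consists of $d$-simplices, $\fc(L,T,\cov_P(x_k))$ is odd, hence at least $1$. So I may pick a full cell $\sigma_k$ of $T$ whose label set $L(\sigma_k)$ lies in $\cov_P(x_k)$, i.e. such that the simplex $\conv\{\,p_i : i\in L(\sigma_k)\,\}$ contains $x_k$.

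Finally I would check that $\sigma_1,\dots,\sigma_{n-d}$ are pairwise distinct. If $\sigma_k=\sigma_j$ for some $k\ne j$, then $L(\sigma_k)=L(\sigma_j)$ is a single $(d+1)$-subset, and the simplex $\conv\{\,p_i : i\in L(\sigma_k)\,\}$ --- which is genuinely $d$-dimensional, its labels being distinct and the point $x_k$ it covers being generic --- would contain both pebbles $x_k$ and $x_j$, contradicting the defining property of $X$. Hence $T$ contains at least $n-d$ distinct full cells. (Equivalently: the families $\cov_P(x_1),\dots,\cov_P(x_{n-d})$ are pairwise disjoint, so no full cell is counted by two of the numbers $\fc(L,T,\cov_P(x_k))$.)

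The only non-elementary ingredient is the existence of a pebble set of size $n-d$, which is the technical heart of \cite{DeLPS}; I would simply invoke it, since the present corollary differs from theirs only in replacing the polytope $P$ by a manifold $M$ with $\partial M\cong\partial P$, and that replacement is absorbed entirely by using Corollary \ref{cor2} in place of the classical polytopal statement. The one point to handle carefully is the genericity of the pebbles: it must be arranged at the outset so that the hypothesis of Corollary \ref{cor2} is met at each $x_k$ and so that the label-simplex of each extracted full cell is honestly $d$-dimensional --- both are open dense conditions on the positions of the $x_k$ and are compatible with the pebble-set construction, so this is a minor but necessary bookkeeping step rather than a genuine obstacle.
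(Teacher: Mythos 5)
Your proof is correct and is essentially identical to the paper's: both invoke the De Loera--Petersen--Su pebble set theorem to get $n-d$ pebbles, apply Corollary \ref{cor2} at each pebble, and use the disjointness of the families $\cov_P(x_k)$ to conclude the resulting full cells are distinct. Your extra care about genericity of the pebbles is a reasonable refinement of a point the paper leaves implicit.
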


For the case $M=P$ this statement is the polytopal Sperner lemma \cite[Th. 1]{DeLPS}. 

\begin{proof} A proof of this corollary follows from  another theorem from  \cite[Th. 4]{DeLPS}: {\it Any convex polytope $P$ in ${\Bbb R}^d$ with $n$ vertices contains a pebble set of size $n-d$.}  (A finite set of points ({\it pebbles}) in $P$ is called a {\it pebble set} if each $d$-simplex of $P$ contains at most one pebble interior to chambers.) 
	
Consider a pebble set $\{p_i\}$ of size $n-d$. Then for $i\ne j$ we have $\cov(p_i)\cap\cov(p_j)=\emptyset$. Thus Corollary \ref{cor2} guarantees that there are at least $n-d$ full cells.  	  	
\end{proof}

\noindent{\bf Remark.} In fact, Meunier's proof of his extension of the polytopal Sperner lemma (De Loera - Peterson - Su's theorem) is not based on the ``pebbles set theorem.'' It is an interesting problem to find an extension of Meunier's theorem for manifolds. 

\medskip 

\noindent{\bf Example 3.2}. Consider the case when $P$ is a pentagon. In $P$ there are three pebbles $p_1,p_2,p_3$, see Fig. 5. So the polytopal Sperner's lemma (Theorem 1.2) and Corollary 3.4 implies that there are at least three fully labelled triangles. Actually, this statement can be improved. 

Note that there are 10 5-labellings for triangles. Five of them are consecutive: (123), (234), (345), (451), (512) and five are non-consecutive. In fact, $\cov(O)$ consists of non-consecutive labellings,  see Example 3.1.   Then Corollary 3.3 implies the following statement:\\  
{\it 
Any Sperner 5-labelling of a triangulation $T$ of a pentagon $P$ must contain at least three full cells. Moreover, at least one of them is not consecutive labelled.  
}

\section{Extensions of Tucker's lemma for manifolds}


\medskip

\noindent{\bf Definition. } Let  $M$ be a closed PL $d$-dimensional manifold  with a free simplicial involution $A:M\to M$.
We say that a pair $(M,A)$ is a {\it BUT (Borsuk-Ulam Type) manifold} if for any continuous  $g:M \to {\Bbb R}^d$ there is a point $x\in M$ such that $g(A(x))=g(x)$. Equivalently, if a continuous  map $f:M \to {\Bbb R}^d$  is { antipodal},  then the zeros set $Z_f:=f^{-1}(0)$ is not empty.

\medskip

In \cite{Mus}, we found several equivalent necessary and sufficient conditions for manifolds to be BUT. For instance, {\it $M$ is a BUT manifold if and only if $M$ admits an antipodal continuous transversal to zeros map $h:M \to {\Bbb R}^d$ with $|Z_h|=2\pmod{4}$.} 

Let $T$ be any equivariant triangulation  of $M$. We say that $$L:V(T)\to \{+1,-1,+2,-2,\ldots, +d,-d\}$$ is an {\it equivariant} (or {\it Tucker's}) labelling if    $L(A(v))=-L(v)$).

An edge $e$ in  $T$ is called {\it complementary} if its two ends are labelled by opposite numbers, i.e. if $e=uv$, then $L(v)=-L(u)$. 






\begin{theorem} \label{TBUT}  A closed PL $d$-dimensional manifold  $M$  with a free simplicial involution $A$ is  BUT  if and only if for any equivariant labelling of any equivariant triangulation $T$ of $M$ there exists a complementary edge.
\end{theorem}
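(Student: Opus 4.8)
The plan is to prove both directions by translating between the combinatorial (labelling) picture and the topological (continuous map) picture, using the degree-mod-2 machinery set up in Section~2 together with the characterization of BUT manifolds recalled from \cite{Mus}.

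For the ``if'' direction, suppose $M$ is \emph{not} BUT; we must produce an equivariant triangulation $T$ and an equivariant labelling $L$ with no complementary edge. Since $M$ is not BUT, there is an antipodal continuous map $g:M\to{\Bbb R}^d$ that is transversal to zero with $Z_g=\emptyset$. After a suitable equivariant triangulation $T$ of $M$ that is fine enough relative to $g$ (so that no closed simplex is mapped through a neighborhood of $0$), one defines $L$ by looking at which coordinate of $g$ is largest in absolute value at each vertex: set $L(v)=+k$ if the $k$-th coordinate of $g(v)$ is positive and dominates, and $L(v)=-k$ if it is negative and dominates, breaking ties by a fixed rule consistent with the involution. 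Because $g$ is antipodal, $g(A(v))=-g(v)$, so $L(A(v))=-L(v)$; and because $g$ never hits $0$ and $T$ is fine, no edge $uv$ can have $g(u)$ and $g(v)$ dominated by opposite coordinates --- such an edge would force $g$ to pass near the origin along the segment. Hence $L$ has no complementary edge, contradicting the hypothesis. So the hypothesis forces $M$ to be BUT.

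For the ``only if'' direction, assume $M$ is BUT, let $T$ be any equivariant triangulation, and let $L$ be any equivariant labelling; we want a complementary edge. Suppose not. Then, mimicking the classical Tucker argument, the labelling $L$ together with the absence of complementary edges lets us build an antipodal \emph{simplicial} map $h$ from $M$ (or a suitable equivariant subdivision) to a $d$-dimensional sphere-like complex --- concretely, to the boundary of the cross-polytope $\partial\Diamond^d\subset{\Bbb R}^d$, whose vertices are $\pm e_1,\dots,\pm e_d$: send $v\mapsto e_k$ if $L(v)=+k$ and $v\mapsto -e_k$ if $L(v)=-k$. The no-complementary-edge condition is exactly what guarantees that the image of every simplex of $T$ spans a face of $\partial\Diamond^d$, so $h$ is a well-defined simplicial map, and $L(A(v))=-L(v)$ makes it antipodal. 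Composing with an antipodal deformation retraction $\partial\Diamond^d\to{\Bbb S}^{d-1}$ and then including ${\Bbb S}^{d-1}\hookrightarrow{\Bbb R}^d$ (or directly: regard $h$ as an antipodal continuous map $M\to{\Bbb R}^d\setminus\{0\}$), we obtain an antipodal continuous map $M\to{\Bbb R}^d$ with empty zero set, contradicting that $M$ is BUT. Hence a complementary edge must exist.

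The main obstacle is the bookkeeping in the first direction: one must be careful that the chosen equivariant triangulation $T$ is genuinely compatible with $g$ (fine enough, and invariant under $A$ simultaneously) and that the tie-breaking rule in the definition of $L$ respects the involution without creating spurious complementary edges; this is where the transversality and the antipodal structure have to be used in tandem. The second direction is more routine, being a direct transcription of the Tucker/Fan-type construction, but it does rely on identifying ${\Bbb R}^d\setminus\{0\}$ (antipodally) with a neighborhood of $\partial\Diamond^d$, i.e.\ on the observation that a nowhere-zero antipodal map is the same data as an antipodal map to the punctured space, which is $A$-homotopy equivalent to ${\Bbb S}^{d-1}$.
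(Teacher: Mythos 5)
Your proposal is correct and follows essentially the same route as the paper: the ``only if'' direction is the simplicial map $f_L$ to the crosspolytope $C^d$, where the absence of complementary edges forces the image of every simplex into the boundary complex and hence away from $0$, contradicting BUT; and the ``if'' direction is the standard Tucker--Borsuk--Ulam equivalence argument (which the paper simply cites from Matou\v{s}ek, Sec.~2.3.2), namely labelling the vertices of a sufficiently fine equivariant triangulation by the dominant coordinate of a nowhere-zero antipodal map. The technical points you flag (equivariant fine subdivisions exist, and tie-breaking by, say, the smallest maximizing index is automatically equivariant since $g(A(v))=-g(v)$) are handled exactly as you suggest.
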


For the case $M={\Bbb S}^d$ this is Tucker's lemma. 

\begin{proof} Let  $e_1,\ldots, e_d$ be an orthonormal basis of ${\Bbb R}^d$.  Any equivariant labelling $L$ of a triangulation $T$ of $M$  defines a simplicial map $f_L:T\to C^d$, where $C^d$ is the crosspolytope in ${\Bbb R}^d$  with vertex set $\{e_1,-e_1,e_2,-e_2,\ldots,e_d,-e_d\}$,  
where for $v\in V(T)$, $f_L(v)=e_i$ if $L(v)=i$ and  $f_L(v)=-e_i$ if $L(v)=-i$. (See details in \cite[Sec. 2.3]{Mat}.) In other words, $f_L=f_{L,C^d}$ (see Section 3). 

Note that any fully labelled simplex contains a complementary edge. Therefore, if $L$ has no complementary edges, then $f_L:T\to {\Bbb R}^d$  has no zeros.

The reverse implication can be proved by the same arguments as equivalence of the Borsuk-Ulam theorem and Tucker's lemma in \cite[2.3.2]{Mat}, i.e. if there is continuous antipodal map $f:M\to{\Bbb S}^{d-1}$ (i.e. $Z_f=\emptyset$) then $T$ and $L$  can be  constructed with no complementary edges. (See also Theorem \ref{PolTuck}.)   
\end{proof}	

Theorem \ref{TBUT} and \cite[Theorem 2]{Mus} immediately imply: 

\begin{cor} Let $M$ be a closed PL manifold with a free involution $A$. Then $M$ is a BUT manifold if and only if  there exist an equivariant triangulation $\Lambda$ of $M$ and an equivariant labelling of $V(\Lambda)$  such that $f_L:\Lambda\to {\Bbb R}^d$ is transversal to zeros and the number of complementary edges is  $4k+2$, where $k$ is integer.
\end{cor}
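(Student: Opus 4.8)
The plan is to deduce the statement from the equivariant reformulation in Theorem \ref{TBUT} together with the transversal characterization of BUT manifolds from \cite[Theorem 2]{Mus}, used in the form recalled above: $(M,A)$ is BUT if and only if there is an antipodal continuous map $h\colon M\to{\Bbb R}^d$ that is transversal to zeros and satisfies $|Z_h|\equiv 2\pmod 4$. The first thing I would record is the bridge between the two formulations: for a simplicial map $f_L$ coming from an equivariant labelling, the point $0$ lies in the image of a simplex $\sigma$ of $\Lambda$ precisely when some label occurs with both signs among the vertices of $\sigma$, i.e. precisely when $\sigma$ has a complementary edge among its faces. I would then argue that when $f_L$ is transversal to zeros this forces every simplex with $0$ in the relative interior of its image to be the carrier of a single complementary edge, so that $Z_{f_L}$ is exactly the set of midpoints of the complementary edges of $L$; in particular $|Z_{f_L}|$ equals the number of complementary edges of $L$. (Since $A$ is free, no complementary edge joins a pair of antipodal vertices, so complementary edges — like the points of $Z_{f_L}$ — come in genuine antipodal pairs.)

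For the ``only if'' direction I would take $(M,A)$ BUT, fix $h$ as above, refine an equivariant triangulation of $M$ sufficiently, and compose a simplicial approximation of $h$ with an equivariant radial deformation of ${\Bbb R}^d\setminus\{0\}$ onto the boundary complex of the crosspolytope $C^d$ — the construction already used in the proof of Theorem \ref{TBUT} and in \cite[2.3.2]{Mat}. This yields an equivariant triangulation $\Lambda$ and an equivariant labelling $L$ with $f_L$ antipodally homotopic to $h$ through maps transversal to zeros, so $f_L$ is transversal to zeros and $|Z_{f_L}|\equiv|Z_h|\equiv 2\pmod 4$ (the invariance of this count modulo $4$ under equivariant homotopies through transversal maps being imported from \cite{Mus}). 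By the correspondence above, $L$ has $4k+2$ complementary edges for some integer $k$.

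For the ``if'' direction I would start from $\Lambda$ and $L$ with $f_L$ transversal to zeros and exactly $4k+2$ complementary edges. Extending $f_L$ affinely over each simplex gives an antipodal continuous map $M\to{\Bbb R}^d$ that is transversal to zeros, and by the correspondence above $|Z_{f_L}|=4k+2\equiv 2\pmod 4$; \cite[Theorem 2]{Mus} then gives that $(M,A)$ is BUT.

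The step I expect to be the main obstacle is making the correspondence ``$Z_{f_L}=\{$midpoints of complementary edges$\}$'' precise: one must verify that transversality of $f_L$ to zeros rules out the degenerate configurations — for instance a $d$-simplex whose labels contain two complementary pairs, whose image would contain $0$ in a positive-dimensional relative interior — leaving only carriers of single complementary edges. The remainder is the standard simplicial-approximation and crosspolytope encoding already used for Theorem \ref{TBUT}, plus the mod-$4$ transversal invariant established in \cite{Mus}.
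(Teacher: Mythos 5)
Your proposal is correct and follows the route the paper intends: the paper gives no proof at all, asserting only that the corollary is ``immediately implied'' by Theorem \ref{TBUT} and \cite[Theorem 2]{Mus}, and your argument is exactly that deduction, with the key bridge --- that under transversality $Z_{f_L}$ consists precisely of the midpoints of the complementary edges, so $|Z_{f_L}|$ equals their number --- spelled out explicitly. The only point left at the paper's own level of informality is the ``only if'' direction's appeal to the mod-$4$ invariance of $|Z_h|$ under the passage from $h$ to its simplicial approximation $f_L$, which you correctly attribute to \cite{Mus}.
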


\begin{cor} \label{TuckM} Let $T$ be a triangulation of a PL-compact $d$-dimensional manifold  $M$ with  boundary $B$ that is homeomorphic to ${\Bbb S}^{d-1}$. Assume $T$ is antipodally symmetric on the boundary. 
	Let $L:V(T)\to \{+1,-1,+2,-2,\ldots,+d,-d\}$ be a labelling of the vertices of $T$ which satisfies $L(-v)=-L(v)$ for all vertices $v$ in  $B$. Then there is a complementary edge in $T$. 
\end{cor}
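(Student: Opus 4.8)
The plan is to argue by contradiction: from a labelling of $T$ with no complementary edge I would manufacture an equivariant self-map of ${\Bbb S}^{d-1}$ that also extends over $M$, and then observe that these two properties are incompatible. Suppose $T$ has no complementary edge. First I would reuse the construction from the proof of Theorem~\ref{TBUT}: $L$ determines the simplicial map $f_L=f_{L,C^d}\colon M\to C^d\subset{\Bbb R}^d$ into the crosspolytope with vertex set $\{\pm e_1,\dots,\pm e_d\}$, sending a vertex $v$ with $L(v)=\pm i$ to $\pm e_i$. Exactly as there, the absence of complementary edges forces every simplex of $T$ to be carried into a face of $C^d$ that misses the origin, so $0\notin f_L(M)$; composing with the radial retraction ${\Bbb R}^d\setminus\{0\}\to\partial C^d\cong{\Bbb S}^{d-1}$ gives a continuous $g\colon M\to{\Bbb S}^{d-1}$. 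Since $L(-v)=-L(v)$ on $B$, both $f_L$ and $g$ are antipodal on $B$, i.e.\ $g(\nu(v))=-g(v)$ for all $v\in B$, where $\nu$ denotes the free simplicial involution of $B$ for which the boundary triangulation is symmetric.

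The contradiction then comes from the restriction $g|_B\colon B\to{\Bbb S}^{d-1}$. On the one hand $g|_B$ is a ${\Bbb Z}_2$-equivariant map from $(B,\nu)$, with $B\cong{\Bbb S}^{d-1}$, into $({\Bbb S}^{d-1},\,x\mapsto-x)$, and hence has odd degree modulo $2$ by the Borsuk--Ulam theorem. On the other hand $g$ extends $g|_B$ over all of $M$, and $B=\partial M$; therefore the ${\Bbb Z}_2$-fundamental class $[B]\in H_{d-1}(B;{\Bbb Z}_2)$ dies under the inclusion $\iota\colon B\hookrightarrow M$ (it equals $\partial$ of the relative fundamental class of $(M,\partial M)$), so $(g|_B)_*[B]=g_*\bigl(\iota_*[B]\bigr)=0$ in $H_{d-1}({\Bbb S}^{d-1};{\Bbb Z}_2)$, i.e.\ $\dg2(g|_B)=0$. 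The two conclusions are incompatible, so $T$ must contain a complementary edge.

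I expect the ``odd degree'' assertion to be the only genuinely delicate point, since the involution $\nu$ need not literally be $x\mapsto-x$ under some identification $B\cong{\Bbb S}^{d-1}$. If $\nu$ is conjugate to the standard antipodality this is precisely classical Borsuk--Ulam; in general I would pass to the double cover $B\to B/\nu$, note that the Gysin sequence endows $B/\nu$ with the mod-$2$ cohomology ring of ${\Bbb RP}^{d-1}$, observe that the induced $\bar g\colon B/\nu\to{\Bbb RP}^{d-1}$ pulls the generator of $H^1({\Bbb RP}^{d-1};{\Bbb Z}_2)$ back to the class classifying the cover $B$, hence pulls $w^{d-1}$ back to a generator of $H^{d-1}(B/\nu;{\Bbb Z}_2)$, so that $\dg2(\bar g)=\dg2(g|_B)=1$ (the case $d=1$ being trivial). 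Alternatively, and more in the spirit of Theorem~\ref{TBUT}, one may form the closed ${\Bbb Z}_2$-manifold $(N,A)$ by gluing two copies of $M$ along $B$ via the \emph{twisted} identification $x\leftrightarrow\nu(x)$, with $A$ interchanging the two copies (a free simplicial involution precisely because $\nu$ is a fixed-point-free involution of order $2$); the argument above is then exactly a proof that $(N,A)$ is a BUT manifold, and Theorem~\ref{TBUT} applied to $(N,A)$, to the equivariant triangulation $T\cup A(T)$, and to the equivariant extension of $L$ produces a complementary edge, which can be chosen inside $M$.
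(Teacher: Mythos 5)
Your proposal is correct, and it actually contains two proofs. The second, ``alternative'' route at the end --- glue two copies of $M$ along $B$ via the fixed-point-free identification so that the swap is a free simplicial involution, extend $L$ equivariantly by $-L$ on the second copy, and invoke Theorem~\ref{TBUT} together with the fact that the resulting twisted double $N=M\cup_B M$ (equivalently, $\hat M\#\hat M$ for the capped-off closed manifold $\hat M$) is BUT --- is precisely the paper's proof; the paper simply cites \cite[Corollary 1]{Mus} for the BUT property of $N$ rather than proving it. Your primary argument is a genuinely different and more self-contained route: assuming no complementary edge, you produce $g\colon M\to{\Bbb S}^{d-1}$ that is equivariant on $B$, and derive a contradiction by computing $\dg2(g|_B)$ twice --- it is odd because $g|_B$ is a ${\Bbb Z}_2$-map out of a free involution on a mod-$2$ homology $(d-1)$-sphere (Borsuk--Ulam, or your Gysin-sequence computation for a nonstandard $\nu$; note that under the paper's reading of ``antipodally symmetric on the boundary'' the involution is conjugate to the standard antipodality, so the classical statement already suffices), and it is zero because $[B]$ dies in $H_{d-1}(M;{\Bbb Z}_2)$. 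What this buys is independence from the external reference \cite{Mus}: your degree computation is in effect a direct proof that the double is BUT, replacing the black-box citation. Both arguments are sound, including the bookkeeping that a complementary edge of $N$ may be taken inside $M$ since $A$ carries complementary edges to complementary edges.
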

\begin{proof}
Consider two copies of $M$: $M_+$ and $M_-$, where for $M_+$ we take a given labelling $L$ and for $M_-$ we take a labelling $\bar L=(-L)$, i.e. $\bar L(v)=-L(v)$. Since $L$ is antipodal on the boundary $B={\Bbb S}^{d-1}$ the connected sum $N:=M\# M$ with a free involution $I:N\to N$, where $I(M_+)=M_-$, is well defined.  \cite[Corollary 1]{Mus} implies that $N$ is BUT. Thus, from Theorem \ref{TBUT} follows that there is a complementary edge. 
\end{proof}	

\begin{figure}
\begin{center}

  \includegraphics[clip,scale=0.9]{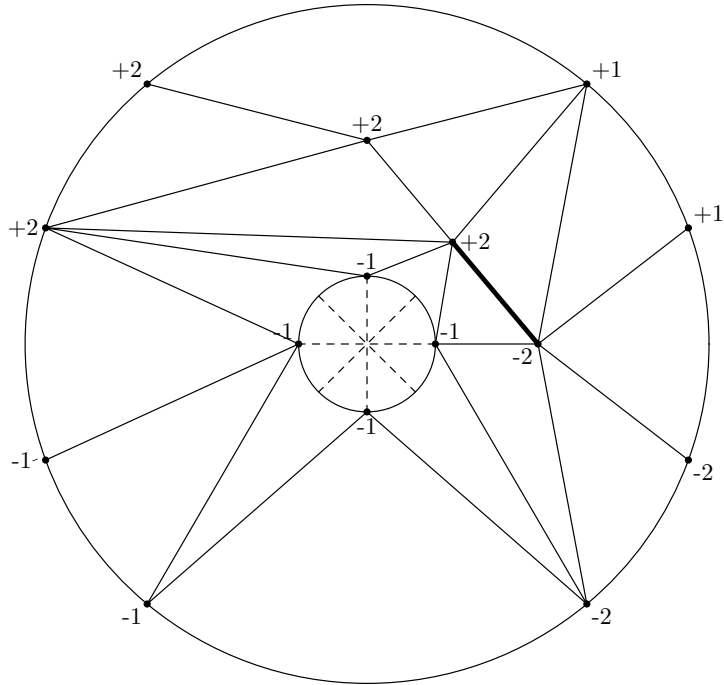}
\end{center}
\caption{Tucker's lemma for   the M\"obius band}
\end{figure}

Now we extend Theorem \ref{TBUT} for $n$-labellings. 

\begin{theorem} \label{PolTuck} 
Let  $P=\{p_1,-p_1,\ldots,p_n,-p_n\}$ be a centrally symmetric set of $2n$ points in ${\Bbb R}^d$. Let points in $P$ be equivariantly labelled by  $\{+1,-1,+2,-2,\ldots,+n,-n\}$. Let $M$ be a closed PL $d$-dimensional manifold with a free involution.  Then $M$ is a BUT manifold if and only if for any equivariant triangulation $T$  of $M$ and for any  equivariant  labelling  $L:V(T)\to \{+1,-1,+2,-2,\ldots, +n,-n\}$ there exists a simplex $s$ in $T$ such that $0\in f_{L,P}(s)$. 
\end{theorem}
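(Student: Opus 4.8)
The plan is to reduce Theorem \ref{PolTuck} to Theorem \ref{TBUT} by showing that the existence of a simplex $s$ with $0\in f_{L,P}(s)$ is the ``same'' obstruction as the existence of a complementary edge, up to replacing the target crosspolytope by an arbitrary centrally symmetric point configuration $P$. The key observation is that $f_{L,P}\colon M\to{\Bbb R}^d$ is an antipodal simplicial map whenever $L$ is equivariant, since $L(A(v))=-L(v)$ forces $f_{L,P}(A(v))=-f_{L,P}(v)$, and $f_{L,P}$ sends each simplex of $T$ to the (possibly degenerate) simplex spanned by the corresponding points of $P$. So $0\in f_{L,P}(s)$ for some $s$ is exactly the statement $Z_{f_{L,P}}\ne\emptyset$.

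For the ``only if'' direction, assume $M$ is BUT. Given any equivariant triangulation $T$ and any equivariant $n$-labelling $L$, the map $f_{L,P}$ is continuous and antipodal, so by the definition of a BUT manifold its zero set is nonempty; any point of $Z_{f_{L,P}}$ lies in the image of some simplex $s$ of $T$, whence $0\in f_{L,P}(s)$. This direction is essentially immediate and does not use any special structure of $P$ beyond central symmetry.

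For the ``if'' direction I would argue by contraposition, exactly parallel to the reverse implication in the proof of Theorem \ref{TBUT}. Suppose $M$ is not BUT. Then there is a continuous antipodal map $g\colon M\to{\Bbb S}^{d-1}$ (no zeros), and the composition with the radial projection onto the boundary of the crosspolytope $C^d$, or equally well onto $\partial(\conv P)$ after noting $\conv P$ is a centrally symmetric body containing $0$ in its interior, gives an antipodal map $M\to\partial(\conv P)$ avoiding $0$. Pulling back an equivariant triangulation of $\partial(\conv P)$ by a fine equivariant approximation of this map, and labelling each vertex of $M$ by the index $\pm i$ of the vertex $\pm p_i$ of $P$ whose star contains its image, produces an equivariant triangulation $T$ and equivariant labelling $L$ such that $f_{L,P}$ stays within a small neighbourhood of $\partial(\conv P)$, hence misses $0$; thus no simplex $s$ has $0\in f_{L,P}(s)$. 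This contraposition, combined with the already-established case $P=\{\pm e_1,\dots,\pm e_d\}$ of Theorem \ref{TBUT}, completes the equivalence.

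The main obstacle is the technical simplicial-approximation step in the ``if'' direction: one must ensure the approximating map can be chosen equivariant and simplicial, that the induced labelling is genuinely a labelling into $\{\pm1,\dots,\pm n\}$ (i.e.\ each image lies in a single open star so the label is well defined), and that $f_{L,P}$ for this $L$ really avoids $0$ — which requires $0\notin f_{L,P}(s)$ for \emph{every} simplex $s$, not merely at the vertices. This is handled by the standard machinery already invoked in \cite[2.3.2]{Mat} for the $C^d$ case, since $\conv P$ is affinely just a centrally symmetric convex body and the argument there is insensitive to the precise shape of the target crosspolytope; the only new point is to carry the central symmetry of $P$ through the approximation, which follows from working ${\Bbb Z}_2$-equivariantly throughout and using \cite[Corollary 1]{Mus} as in Corollary \ref{TuckM} to guarantee the relevant ${\Bbb Z}_2$-structures exist.
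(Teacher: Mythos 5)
Your proposal is correct and follows essentially the same route as the paper: the forward direction is immediate from the definition of BUT applied to the antipodal piecewise linear map $f_{L,P}$, and the converse is the same contraposition, replacing a non-vanishing antipodal map $M\to{\Bbb S}^{d-1}$ by a map to $\partial(\conv P)$ and labelling vertices by nearest vertices of $P$. You are in fact more explicit than the paper about the simplicial-approximation subtlety (ensuring $0\notin f_{L,P}(s)$ for whole simplices, not just vertices), which the paper disposes of with a ``without loss of generality.''
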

\begin{proof} If $M$ is BUT, then $f_{L,P}$ has zeros, so there is a simplex $s$ as required. 
	
\medskip
	
Suppose $M$ is not BUT. Then there is a continuous antipodal $h:M\to {\Bbb S}^{d-1}$. Let $T$ be an equivariant triangulation of $M$. Let $Q$ denote the boundary of the convex hull of $P$ in ${\Bbb R}^d$. Without loss of generality we may assume that $h:M\to Q$ and for any vertex $v\in V(T)$ the image $f(v)$ has only one closest vertex $p$ in $Q$. Then set $L(v):=L(p)$. 
This labelling implies that $f_{L,P}$ is an antipodal simplicial map from $T$ to $Q$. Thus, $0$ in ${\Bbb R}^d$ is not covered by $f_{L,P}$, a contradiction. 
\end{proof}		

\section{Radon partitions and Ky Fan's lemma for manifolds}

In this section we show that Ky Fan's lemma  follows from Theorem \ref{PolTuck}. 

Radon's theorem on convex sets states that {\it any set $S$ of $d + 2$ points in ${\Bbb R}^d$ can be partitioned into two (disjoint) sets $A$ and $B$ whose convex hulls intersect. Moreover, if rank$(S)=d$, then this partition is unique.} 

The  partition $S=A\bigcup B$ is called the {\it Radon partition} of $S$. 

Breen \cite{Breen} proved that if $S$ is a $(d+2)$-subset of the moment curve $C_d$ in ${\Bbb R}^d$, then $S=A\bigcup B$ is the Radon partition if and only if $A$ and $B$ alternate along $C_d$. Actually, Breen's theorem can be extended for convex curves in ${\Bbb R}^d$. 

We say that a curve $K$ in $\mathbb{R}^d$ is {\it convex} if for every hyperplane $K$ intersects it  at no more than $d$ points. It is well known that the moment curve $C_d$ is convex. In \cite[Sec. 3]{Mus98} we considered several other examples of convex curves. 

\medskip

\noindent{\bf Definition.}
Let $K=\{x(t)=(x_1(t),\ldots,x_d(t)): t\in[a,b]\}$ be  a curve in $\mathbb{R}^d$. Let $S=\{x(t_1),\ldots,x(t_{d+2})\}$, where $a<t_1<t_2<\ldots<t_{d+2}< b$.  We say that $A$ and $B$  {\it alternate along} $K$ if $S=A\bigcup B$, where  $A=\{x(t_1),x(t_3),\ldots\}$ and $B=\{x(t_2),x(t_4),\ldots\}$.  

\begin{theorem}\label{convcurve} A curve $K$ in $\mathbb{R}^d$ is convex if and only if for any $(d+2)$-subset $S$ of $K$ its Radon partition sets $A$ and $B$ alternate along $K$. 
\end{theorem}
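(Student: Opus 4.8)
The plan is to prove both directions by relating the Radon partition to sign changes of a linear functional along the curve. First I would set up the standard affine-dependence picture: given a $(d+2)$-subset $S=\{x(t_1),\ldots,x(t_{d+2})\}$ of $K$ with $t_1<\cdots<t_{d+2}$, the points satisfy (generically, when $\rk(S)=d$) a unique affine dependence $\sum_{j=1}^{d+2}\lambda_j x(t_j)=0$ with $\sum_j\lambda_j=0$ and all $\lambda_j\neq 0$; the Radon partition is then $A=\{x(t_j):\lambda_j>0\}$, $B=\{x(t_j):\lambda_j<0\}$. So "the Radon partition sets alternate along $K$" is literally the statement that the sign sequence $(\mathrm{sgn}\,\lambda_1,\ldots,\mathrm{sgn}\,\lambda_{d+2})$ strictly alternates, i.e. has exactly $d+1$ sign changes. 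Thus the theorem reduces to: $K$ is convex $\iff$ for every such $(d+2)$-tuple the coefficient vector of the (unique) affine dependence alternates in sign.

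Next I would recall Cramer's rule: each $\lambda_j$ equals, up to a common nonzero factor, $(-1)^j$ times the determinant $D_j$ of the $(d+1)\times(d+1)$ matrix obtained by stacking the augmented vectors $(1,x(t_i))$ for $i\neq j$. Hence $\mathrm{sgn}\,\lambda_j=\varepsilon(-1)^j\,\mathrm{sgn}\,D_j$ for a fixed sign $\varepsilon$, and the alternation of the $\lambda_j$ is equivalent to all the $D_j$ having the \emph{same} nonzero sign. Now $D_j>0$ for all $j$ (or $<0$ for all $j$) for every increasing $(d+1)$-tuple of parameter values is precisely the statement that the map $t\mapsto(1,x_1(t),\ldots,x_d(t))$ is a \emph{curve of order $d+1$} / the vectors form a "Chebyshev system" — equivalently that no hyperplane meets $K$ in more than $d$ points, with the nondegeneracy that the determinant never vanishes on distinct arguments. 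That last equivalence is the substance of the classical theory of convex curves (Carathéodory, Schoenberg, Karlin): a $(d+1)\times(d+1)$ determinant in the augmented coordinates vanishes for some increasing arguments exactly when some hyperplane passes through $d+1$ of the curve's points, and a sign change of such a determinant as arguments vary forces an intermediate zero, hence a hyperplane with $\geq d+1$ intersection points.

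So the forward direction runs: assume $K$ convex; then the determinant $\Delta(s_1,\ldots,s_{d+1})=\det[(1,x(s_i))]_i$ never vanishes for distinct $s_i$, and by continuity on the connected set $\{s_1<\cdots<s_{d+1}\}$ it has constant sign; feeding this into the Cramer formula gives that all $\lambda_j$ alternate, i.e.\ $A,B$ alternate. For the converse: assume every $(d+2)$-subset has alternating Radon partition; I would argue contrapositively that if $K$ is not convex there is a hyperplane $H$ meeting $K$ in $\geq d+1$ points, so one can find $d+2$ points of $K$ (adding one more, or perturbing) for which the affine-dependence coefficient vector fails to alternate — concretely, a hyperplane through $d+1$ points $x(s_1),\ldots,x(s_{d+1})$ makes $\Delta$ vanish, and on the two sides of that configuration in parameter space $\Delta$ must change sign (or one checks directly that the would-be Radon partition puts two consecutive points on the same side), contradicting alternation. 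The main obstacle I anticipate is this converse: turning "$K$ is not convex" into a concrete non-alternating $(d+2)$-tuple cleanly — one has to handle the degenerate case $\rk(S)<d$ (where the Radon partition need not be unique) and make sure the sign-change argument for $\Delta$ is rigorous rather than hand-wavy. I would isolate this as a lemma about sign-regularity of the augmented coordinate matrix, citing the convex-curve framework of \cite{Mus98} (and the moment-curve prototype) rather than reproving the full Schoenberg/Karlin machinery, and then both directions follow from the Cramer-rule translation above.
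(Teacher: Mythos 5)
Your argument takes a genuinely different route from the paper. The paper's proof is purely geometric in both directions: for ``only if,'' it takes the (at most $d$) points of $K$ that separate $A$ from $B$ along the curve, pads them to $d$ points, and uses the hyperplane through them to separate $\conv(A)$ from $\conv(B)$; for ``if,'' it takes a hyperplane $H$ meeting a non-convex $K$ in $\ge d+1$ points, picks one point in each of $d+2$ consecutive arcs of $K\setminus H$, and observes that the alternating partition is then separated by $H$ and so cannot be the Radon partition. You instead pass through the affine dependence $\sum_j\lambda_j x(t_j)=0$, $\sum_j\lambda_j=0$, identify the Radon partition with the sign pattern of $(\lambda_j)$, and use Cramer's rule to reduce alternation to the constancy of sign of the augmented determinant $\Delta(s_1,\dots,s_{d+1})$ on the connected set $\{s_1<\dots<s_{d+1}\}$ --- the Chebyshev-system characterization of convexity. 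For the ``only if'' direction this is clean and, if anything, more rigorous than the paper's version (which asserts without justification that consecutive arcs of $K\setminus H$ lie on opposite sides of $H$, ignoring tangencies); it also makes transparent why $\rk(S)=d$ and uniqueness of the dependence are automatic for convex curves.

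The soft spot is exactly where you flagged it: in the converse, your primary mechanism --- ``$\Delta$ vanishes at the degenerate configuration, hence changes sign nearby'' --- does not follow; a continuous determinant can vanish without changing sign, and a zero of $\Delta$ only tells you that some $\lambda_j=0$ for a suitable $(d+2)$-tuple, which lands you in the degenerate case where the Radon partition is non-unique and ``alternation'' becomes ambiguous rather than manifestly false. To close this you should abandon the determinant here and do what your parenthetical fallback (and the paper) does: from a hyperplane $H$ meeting $K$ in $\ge d+1$ points, choose $d+2$ points in distinct consecutive components of $K\setminus H$; then the odd-indexed points lie weakly on one side of $H$ and the even-indexed ones on the other, so the convex hulls of the alternating parts are separated by $H$ and the alternating partition is not Radon, while Radon's theorem guarantees some Radon partition exists --- giving the required non-alternating example. (Even this needs the mild caveat, present but unaddressed in the paper too, that one should choose the $\ge d+1$ intersection points to be genuine crossings, or perturb $H$, so that consecutive arcs really do switch sides.) With that substitution your proof is complete and is a legitimate alternative to the paper's.
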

\begin{proof} Let $K$ be convex and $S=\{x(t_1),\ldots,x(t_{d+2})\}$ be a $(d+2)$-subset of $K$. Let $A\bigcap B$ be the Radon partition of $S$. If $A$ and $B$ do not alternate along $K$, there are at most $d$ points $P=\{x(\tau_i)\}$ which separate $A$ and $B$ on $K$. If $r=|P|<d$, we add to $P$ $d-r$ points $x(\tau)$  with $\tau\in(a,t_1)$. Then $P$ defines a hyperplane $H$ which passes through the points in $P$.  Clearly, $H$ separates $A$ and $B$  in $\mathbb{R}^d$. Thus, $A\bigcap B$ cannot be the Radon partition of $S$, a contradiction.

Suppose that for any $(d+2)$-subset $S$ of $K$, its Radon partition sets $A$ and $B$ alternate along $K$. If $K$ is not convex, then there is a hyperplane $H$ which intersects $K$ at $r\ge d+1$ points. Therefore, $H$ separates $K$ into $r+1$ connected components $C_1,\ldots,C_{r+1}$. Let $S=\{x(t_1),\ldots,x(t_{d+2})\}$, where $x(t_i)\in C_i$. Since $A$ and $B$ which alternate along $K$ are separated by $H$, the partition $S=A\bigcup B$ is not Radon's - a contradiction.
\end{proof}

\medskip

\noindent{\bf Definition.}  Let $P$ be a convex polytope in $\mathbb{R}^d$ with $2n$ centrally symmetric vertices $\{p_1,-p_1,\ldots,p_n,-p_n\}$.  We say that $P$  is {\it ACS (Alternating Centrally Symmetric)} $(n,d)$-polytope if the set of all simplices in $\cov_P(0)$, that contain the origin $0$ of $\mathbb{R}^d$ inside,  consists of  edges $(p_i,-p_i)$ and  $d$-simplices with vertices \{$p_{k_0},-p_{k_1},\ldots,(-1)^dp_{k_d}$\} and \{$-p_{k_0},p_{k_1},\ldots,(-1)^{d+1}p_{k_d}$\}, where $1\le k_0<k_1<\ldots<k_d\le n$.  

\begin{theorem} \label{corRP} For any integer $d\ge 2$ and $n\ge d$ there exists ASC $(n,d)$-polytope. 
\end{theorem}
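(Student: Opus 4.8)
The plan is to construct an explicit ACS $(n,d)$-polytope from a convex curve, using Theorem \ref{convcurve} as the main tool. First I would take a convex curve $K$ in $\mathbb{R}^d$ — for concreteness the moment curve $C_d$, or any convex curve for which a centrally symmetric arrangement of points is convenient — and pick $n$ points $x(t_1),\ldots,x(t_n)$ on it with $t_1<\cdots<t_n$. I then set $p_i := x(t_i)$ and let $P$ be the convex hull of the centrally symmetric set $\{p_1,-p_1,\ldots,p_n,-p_n\}$. The goal is to show that, for a suitable choice of the curve and the parameters, the simplices of $\cov_P(0)$ containing the origin in their interior are exactly the edges $(p_i,-p_i)$ together with the claimed $d$-simplices with alternating signs on increasing index sets.

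The key step is to analyze which subsets of the $2n$ vertices have $0$ in the interior of their convex hull, and to reduce this to an alternation condition along a convex curve. A $d$-simplex on vertices chosen from $\{\pm p_i\}$ contains $0$ in its interior precisely when those $d+1$ vertices, together with their relation to $0$, form a (unique, since rank is $d$) Radon partition of a $(d+2)$-point set against a phantom point, or more directly: $0\in\mathrm{int\,conv}\{q_0,\ldots,q_d\}$ with $q_j\in\{\pm p_{k_j}\}$ iff the point set $\{q_0,\ldots,q_d\}\cup\{0\}$ — equivalently $\{q_0,\ldots,q_d, -q_0\}$ after a translation/relabeling — has $0$ in the convex hull of every facet-avoiding split, which I can phrase through Radon partitions and Breen's alternation criterion. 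The centrally symmetric structure of $\{\pm p_i\}$ lying on $K$ and $-K$ means that the Radon partition of any $(d+2)$-subset alternates along the doubled curve $K\cup(-K)$; tracking the sign pattern along this doubled curve is what forces the combinatorial form $\{p_{k_0},-p_{k_1},\ldots,(-1)^d p_{k_d}\}$ (and its negative) with $k_0<\cdots<k_d$, and it is what rules out any other simplex containing $0$. The edges $(p_i,-p_i)$ are the degenerate ($d=1$-dimensional) members of $\cov_P(0)$ and appear automatically since the segment from $p_i$ to $-p_i$ passes through $0$.

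I expect the main obstacle to be verifying that \emph{no other} simplices of $S(P)$ cover $0$ — that is, excluding sign patterns that are not strictly alternating, and excluding lower-dimensional simplices other than the antipodal edges. Establishing that a strictly alternating sign pattern on an increasing index set does give a simplex through $0$ is the ``existence'' half and follows fairly directly from Theorem \ref{convcurve} applied to the $(d+2)$-point set formed by the $d+1$ chosen (signed) vertices plus the origin, viewed as points on a convex curve obtained from $K$; the ``only if'' half, showing these are the only ones, requires a careful case analysis of where $0$ can sit relative to a non-alternating configuration, using convexity of $K$ to produce a separating hyperplane through $d$ of the points and hence witness that $0$ is not in the interior. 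A secondary technical point is to choose $n\ge d$ and the parameter spacing so that the resulting polytope is genuinely $d$-dimensional and the enumeration of covering simplices is exactly the asserted list with no coincidences; handling the parity/base cases $n=d$ and small $d\ge 2$ separately should dispatch this. Once these are in place, the polytope $P$ satisfies the defining condition of an ACS $(n,d)$-polytope, completing the proof.
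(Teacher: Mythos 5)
Your overall strategy (centrally symmetric vertex set built from a convex curve, reduce ``$0\in\Delta$'' to a Radon partition, invoke Theorem \ref{convcurve} to force alternation) is the right one, but the reduction as you set it up does not go through, and this is the heart of the proof. You place the $p_i$ directly on a convex curve $K\subset\mathbb{R}^d$ and then want to apply the alternation criterion to $(d+2)$-point subsets of the ``doubled curve'' $K\cup(-K)$, or to sets of the form $\{q_0,\ldots,q_d,0\}$ or $\{q_0,\ldots,q_d,-q_0\}$. None of these configurations lies on a convex curve in a usable way: a hyperplane can meet $K\cup(-K)$ in up to $2d$ points, so it is not convex and Theorem \ref{convcurve} says nothing about it; the origin is not on $K$ at all; and there is also a count mismatch, since a $d$-simplex has only $d+1$ vertices while Radon in $\mathbb{R}^d$ is about $d+2$ points. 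A further unaddressed problem is normalization: writing $\sum_j\lambda_j\varepsilon_j p_{k_j}=0$ and splitting by sign gives $\sum_{+}\lambda_j p_{k_j}=\sum_{-}\lambda_j p_{k_j}$, but the two groups of weights need not have equal total mass, so this does not say that the convex hulls of the positive-sign and negative-sign vertex groups intersect. Your ``main obstacle'' paragraph correctly identifies that excluding non-alternating patterns is the hard part, but the tools you propose cannot close it.

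The missing idea is to drop one dimension and lift. Take $q_1,\ldots,q_n$ on a convex curve $K$ in $\mathbb{R}^{d-1}$ and set $p_i:=(q_i,1)\in\mathbb{R}^d$, so the $p_i$ lie on the hyperplane $x_d=1$ and the $-p_i$ on $x_d=-1$. For a $d$-simplex $\Delta$ with vertex set $A\cup(-B)$, $A,B\subset\{p_i\}$, the last coordinate of any convex combination representing $0$ forces the weights on $A$ and on $-B$ to each sum to $1/2$; hence $0\in\Delta$ if and only if $\conv(A)\cap\conv(B)\ne\emptyset$, i.e.\ if and only if $A\cup B$ --- which is $d+1$ points in $\mathbb{R}^{d-1}$, exactly the Radon number there --- admits $A\mid B$ as its Radon partition. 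Now Theorem \ref{convcurve}, applied to the genuinely convex curve $K\subset\mathbb{R}^{d-1}$, yields that $A$ and $B$ alternate along $K$, which is precisely the sign pattern $\{p_{k_0},-p_{k_1},\ldots,(-1)^dp_{k_d}\}$ and simultaneously rules out every other $d$-simplex; the edges $(p_i,-p_i)$ are the only remaining members of $\cov_P(0)$. Without this lifting step your argument has no valid route from ``$0$ is covered'' to an alternation statement about a convex curve.
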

\begin{proof} Let $q_1,\ldots,q_n$ be points on a convex curve $K$ in $\mathbb{R}^{d-1}$. Let $p_i=(q_i,1)\in \mathbb{R}^d$. Denote by $P(n,d)$ a convex polytope with vertices $\{p_1,-p_1,\ldots,p_n,-p_n\}$.  Clearly, $0\in(-p_i,p_i)$. Let $\Delta$ be a simplex spaned by vertices of $P(n,d)$.  Let $V(\Delta)=A\bigcup(-B)$, where  $A$ and $B$ are vertices with $x_d=1$. It is easy to see that  $0\in \Delta$ if and only if $\conv(A)\bigcap\conv(B)=\emptyset$, i.e. $S=A\bigcup B$ is the {Radon partition} of $S$. Then Theorem \ref{convcurve} implies that $A$ and $B$ alternate along $K$. Thus, $P(n,d)$ is an ASC $(n,d)$-polytope. 
\end{proof}

Let $P$ be an ASC $(n,d)$-polytope. If we apply Theorem \ref{PolTuck} for $P$, then we obtain the following theorem. 

\begin{theorem} Let $M$ be a  BUT $d$-dimensional manifold with a free involution $A$. Let $T$ be any equivariant triangulation  of $M$. Let $L:V(T)\to \{+1,-1,+2,-2,\ldots, +n,-n\}$ be an equivariant  labelling. Suppose that there are no  complementary edges in $T$. Then there are an odd number of $d$-simplices with labels in the form $\{k_0,-k_1,k_2,\ldots,(-1)^dk_d\}$, where  $1\le k_0<k_1<\ldots<k_d\le n$.
\end{theorem}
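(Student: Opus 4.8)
The plan is to deduce this final theorem directly from Theorem \ref{PolTuck} together with Theorem \ref{corRP}, since the statement is precisely the combinatorial shadow of the ``zero is covered'' conclusion when the target point set is taken to be an ACS $(n,d)$-polytope. First I would invoke Theorem \ref{corRP} to fix an ACS $(n,d)$-polytope $P$ with vertex set $\{p_1,-p_1,\ldots,p_n,-p_n\}$, equivariantly labelled so that $p_i$ carries label $i$ and $-p_i$ carries $-i$. Then, given the equivariant triangulation $T$ of the BUT manifold $M$ and the equivariant labelling $L$, I would form the simplicial map $f_{L,P}:M\to{\Bbb R}^d$ as in Section 3. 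Because $M$ is BUT, Theorem \ref{PolTuck} guarantees a simplex $s$ of $T$ with $0\in f_{L,P}(s)$; more is true, and the real work is to pass from ``exists one'' to a parity count.

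The key steps, in order: (1) Using the hypothesis that $T$ has no complementary edge, observe that $f_{L,P}$ never maps a vertex $v$ and a neighbour $w$ to an antipodal pair $p_i,-p_i$; hence no edge of $T$ has $0$ in the relative interior of its image, so the only simplices $s$ with $0\in f_{L,P}(s)$ are $d$-simplices on which $f_{L,P}$ is nondegenerate and $0$ lies in the interior of the image $d$-simplex. (2) By the defining property of an ACS polytope, such an image $d$-simplex must be one of the two alternating types $\{p_{k_0},-p_{k_1},\ldots,(-1)^dp_{k_d}\}$ or its antipode, with $1\le k_0<\cdots<k_d\le n$; translating through $f_{L,P}$, the label set of $s$ is exactly of the claimed form $\{k_0,-k_1,k_2,\ldots,(-1)^dk_d\}$ or its negative. (3) Apply the mod-$2$ degree argument: after a generic perturbation making $0$ a regular value, $|f_{L,P}^{-1}(0)|$ is the number of such $d$-simplices (counted with the right multiplicity), and it is $\equiv\deg_2(f_{L,P})\pmod 2$. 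The BUT property via \cite[Theorem 2]{Mus} (or directly the characterization recalled after the BUT definition) gives that this count is $\equiv 1\pmod 2$. (4) Finally, the free involution $A$ pairs each $d$-simplex of the first type with one of the second type; if one insists on counting only simplices with label set literally $\{k_0,-k_1,\ldots,(-1)^dk_d\}$ (the ``first type''), the pairing is a bijection onto the second type, so the first-type count equals exactly half the total, and the total being $2\pmod 4$ for a suitable transversal representative forces the first-type count to be odd.

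The main obstacle I expect is step (3)--(4): making the parity bookkeeping airtight. One must be careful that the ACS hypothesis genuinely rules out any $d$-simplex of $T$ mapping onto a $d$-simplex of $\cov_P(0)$ of a non-alternating type (this is where convexity of the curve and Theorem \ref{convcurve} enter, and it is essential that $\cov_P(0)$ contains \emph{only} the edges $(p_i,-p_i)$ and the alternating $d$-simplices). Equally delicate is the claim that the number of preimages of $0$ is $2\pmod 4$ rather than merely odd: this needs the refined BUT characterization (``admits an antipodal transversal-to-zero map $h$ with $|Z_h|\equiv 2\pmod 4$'') combined with the fact that $f_{L,P}$ can be taken transversal to $0$ and that its zero set is $A$-invariant and free, hence of size divisible by $2$; the congruence $|Z_{f_{L,P}}|\equiv|Z_h|\equiv 2\pmod 4$ then follows from $\deg_2$-invariance upgraded to the $\pmod 4$ statement exactly as in \cite{Mus}. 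Once that is in place, halving the count gives the odd number of first-type simplices, and the inequality $n\ge d+1$ of the classical Ky Fan lemma is the special case where no such simplex could exist if $n\le d$.

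\begin{proof}
By Theorem \ref{corRP}, choose an ACS $(n,d)$-polytope $P$ with centrally symmetric vertex set $\{p_1,-p_1,\ldots,p_n,-p_n\}$, and label $p_i$ by $i$ and $-p_i$ by $-i$, so that $f_{L,P}:T\to{\Bbb R}^d$ is the equivariant simplicial map associated to $L$ as in Section 3. Since $M$ is BUT, Theorem \ref{PolTuck} applies, and in fact we analyze which simplices $s$ satisfy $0\in f_{L,P}(s)$.

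Because $T$ has no complementary edge, $f_{L,P}$ maps no edge of $T$ to a pair $\{p_i,-p_i\}$; hence $0$ never lies in the image of an edge, and the only simplices $s$ of $T$ with $0\in f_{L,P}(s)$ are $d$-simplices on which $f_{L,P}$ is nondegenerate with $0$ interior to the image. By the definition of an ACS polytope, the set $\cov_P(0)$ consists only of the edges $(p_i,-p_i)$ and the $d$-simplices with vertex sets $\{p_{k_0},-p_{k_1},\ldots,(-1)^dp_{k_d}\}$ and $\{-p_{k_0},p_{k_1},\ldots,(-1)^{d+1}p_{k_d}\}$ for $1\le k_0<\cdots<k_d\le n$. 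Therefore the image of such an $s$ is one of these two alternating types, and the label set of $s$ is $\{k_0,-k_1,k_2,\ldots,(-1)^dk_d\}$ or its negative.

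Call a $d$-simplex of $T$ whose label set has the form $\{k_0,-k_1,k_2,\ldots,(-1)^dk_d\}$ a \emph{first-type} cell, and one with the negated label set a \emph{second-type} cell. The free involution $A$ sends $f_{L,P}(A(v))=-f_{L,P}(v)$, so $A$ carries first-type cells bijectively to second-type cells. Hence the total number of cells $s$ with $0\in f_{L,P}(s)$ equals twice the number of first-type cells.

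After a generic equivariant perturbation making $0$ a regular value, $|f_{L,P}^{-1}(0)|$ equals that total and, being $A$-invariant and free, is even; moreover $|f_{L,P}^{-1}(0)|\equiv\dg2(f_{L,P})\cdot(\text{const})$ with the $\pmod 4$ refinement of \cite[Theorem 2]{Mus} giving $|f_{L,P}^{-1}(0)|\equiv 2\pmod 4$ precisely because $M$ is BUT. Dividing by $2$, the number of first-type cells is odd, which is the assertion. In particular such a cell has $d+1$ distinct absolute values among $\{1,\ldots,n\}$, so $n\ge d+1$.
\end{proof}
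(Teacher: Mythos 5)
Your proposal is correct and follows the route the paper intends (build an ACS $(n,d)$-polytope via Theorem \ref{corRP}, feed it into the machinery of Theorem \ref{PolTuck}), but it actually does more than the paper, whose entire justification is the sentence ``apply Theorem \ref{PolTuck} for $P$.'' Theorem \ref{PolTuck} as stated only yields the \emph{existence} of a simplex covering $0$, not the odd count, so your steps (3)--(4) supply precisely the missing ingredient: the no-complementary-edge hypothesis plus the ACS property force $0$ to be covered only by interiors of nondegenerately mapped $d$-simplices of the two alternating types, the free involution pairs the two types bijectively, and the $\bmod\ 4$ invariance of $|Z_f|$ for antipodal transversal-to-zero maps from \cite{Mus} gives $|f_{L,P}^{-1}(0)|\equiv 2\pmod 4$, hence an odd number of first-type cells. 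Two small points to tighten: (i) the ``generic equivariant perturbation'' in your proof is unnecessary and slightly dangerous (a perturbation could a priori change the count); your own step (1) already shows $f_{L,P}$ is transversal to $0$ as it stands, so just say that; (ii) the displayed relation ``$|f_{L,P}^{-1}(0)|\equiv\dg2(f_{L,P})\cdot(\mathrm{const})$'' is not meaningful as written --- the correct statement to invoke is that $|Z_f|\bmod 4$ is the same for \emph{all} antipodal transversal maps $f:M\to{\Bbb R}^d$ and equals $2$ exactly when $M$ is BUT, which is the content of \cite{Mus} (the paper quotes only the ``there exists $h$ with $|Z_h|=2\pmod 4$'' form, so the invariance should be cited explicitly).
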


For the case $M={\Bbb S}^d$ this theorem is Ky Fan's combinatorial lemma \cite{KyFan}. Actually, it is a new proof of this lemma.



\section{Sperner and Tucker's type lemmas for the case $m\ge d$}







Now we consider extensions of the polytopal Sperner and Tucker lemmas for the case when $d\le\dim{M}=m$. In this case, the set of fully-colored $d$-simplices defines certain $(m-d)$-submanifold  $S$ of $M$. A natural extension of Theorem \ref{SpM} is that $S$ is cobordant to zero. We also consider an extension of the Tucker lemma.

An $m$-dimensional manifold $M$ is called {\it null-cobordant} (or  {\it cobordant to zero}) if there is a cobordism between $M$ and the empty manifold; in other words, if $M$ is the entire boundary of some $(m+1)$-manifold. Equivalently, its cobordism class is trivial.

\begin{theorem} \label{BordiS} Let $P=\{p_1,\ldots,p_n\}$ be a set of points in ${\Bbb R}^d$. Suppose  $y\in{\Bbb R}^d$ is such that $\cov_P(y)$ consists of $d$-simplices. Let $M$ be a a closed PL $m$-dimensional manifold with $m\ge d$. Then for any  $n$-labelling $L$ of a triangulation $T$ of $M$ the set $S:=f_{L,P}^{-1}(y)$ is a null-cobordant manifold of dimension $d-m$. 	
\end{theorem}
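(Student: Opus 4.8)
\medskip
\noindent\textbf{Proof plan.}
The plan is to exhibit $S$ as the boundary of a compact PL manifold sitting inside $M$, obtained as the preimage of a generic arc in ${\Bbb R}^d$ joining $y$ to a point outside $\conv(P)$. This is the natural extension of the argument behind Theorem \ref{SpM}: there ($m=d$) the preimage of $y$ is a finite set of points and the cobordism is a compact $1$-manifold, so $|S|$ is even; here the cobordism has dimension $m-d+1$.

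First I would verify that, because $\cov_P(y)$ consists of $d$-simplices, the point $y$ is a regular value of the piecewise linear map $f:=f_{L,P}\colon M\to{\Bbb R}^d$. On each $m$-simplex $\tau$ of $T$ the restriction $f|_\tau$ is affine, and $y$ lies on no simplex of $S(P)$ of dimension less than $d$; hence $f^{-1}(y)\cap\tau$ is either empty or an affine cross-section of $\tau$ of dimension exactly $m-d$, and these cross-sections match along the faces of $T$. Consequently $S=f^{-1}(y)$ is a closed PL submanifold of $M$ of codimension $d$, i.e. of dimension $m-d$ (when $m=d$ it is a finite point set and we are back in the setting of Theorem \ref{SpM}).

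Since $M$ is closed, $f(M)$ is a compact subset of $\conv(P)$, so for any $z\in{\Bbb R}^d\setminus\conv(P)$ we have $f^{-1}(z)=\emptyset$. By standard PL general-position (transversality) arguments I can choose a PL arc $\gamma\colon[0,1]\to{\Bbb R}^d$ with prescribed endpoints $\gamma(0)=y$, $\gamma(1)=z$ to which $f$ is transversal; note $y$ and $z$ are both already regular values, so the perturbation is carried out rel endpoints and $S$ is unaffected. Then $W:=f^{-1}\bigl(\gamma([0,1])\bigr)$ is a compact PL manifold with boundary of dimension $m-d+1$, and $\partial W=f^{-1}(\{y,z\})=S\sqcup\emptyset=S$. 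Thus $S$ bounds $W$ inside $M$, hence $S$ is null-cobordant, as claimed.

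The easy ingredients are compactness of $W$ (as $M$ is compact and $f$ continuous), the emptiness of $f^{-1}(z)$ (the convex-hull observation, exactly as in the proof of Theorem \ref{SpM}), and the regularity of $y$. The step that needs genuine care is the PL general-position claim guaranteeing that $f^{-1}(\gamma)$ is a bona fide manifold with boundary equal to $S$; one must check that $\gamma$ can be taken to cross the $f$-images of the $(m-1)$-skeleton of $T$ transversally while meeting no image of a lower-dimensional face in a degenerate way. An alternative route, avoiding transversality in the target, is to subdivide and perturb $f$ itself — keeping it unchanged on the vertices whose images lie near $y$ — into a simplicial map transversal to the straight segment $[y,z]$; both routes give the same conclusion and both reduce, for $m=d$, to the parity count in Theorem \ref{SpM}.
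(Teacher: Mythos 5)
Your proof is correct and, despite the different packaging, is essentially the paper's construction. The paper takes $W=M\times[0,1]$, the linear homotopy $F(x,t)=(1-t)f_{L,P}(x)+tq$ to a constant map, and the cobordism $F^{-1}(y)$; since $F(x,t)=y$ forces $f_{L,P}(x)=y+\frac{t}{1-t}(y-q)$ with $t$ then determined by $f_{L,P}(x)$, this $F^{-1}(y)$ is canonically homeomorphic to $f_{L,P}^{-1}(R)$ where $R$ is the ray from $y$ in the direction $y-q$, truncated once it leaves $\conv(P)$ --- that is, to your $f^{-1}(\gamma)$ for a straight arc $\gamma$ from $y$ to a point $z$ outside the convex hull. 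So the two cobordisms coincide; yours simply lives inside $M$ rather than in $M\times[0,1]$. What your write-up adds is an explicit treatment of the two points the paper asserts without argument: first, that the hypothesis on $\cov_P(y)$ makes $y$ a PL-regular value (by Carath\'eodory, no face of $T$ whose image has affine dimension less than $d$ can contain $y$, so every face meeting $f^{-1}(y)$ maps onto a full-dimensional neighborhood of $y$); second, that the arc --- equivalently, the direction $y-q$ --- must be put in general position with respect to the images of the faces of $T$ before the preimage is guaranteed to be a manifold with boundary exactly $S$. The paper's bare claim that $F$ ``is transversal to $y$'' holds only for generic $q$, so your identification of this as the step needing genuine care is accurate, and either route closes the gap the same way.
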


Note that for $d=m$ this theorem yields Theorem \ref{SpM}. In this case $S$ consists of even number of points. 

\begin{proof} Let 
	$$W:=M\times[0,1], \; M_0:=M\times \{0\} \mbox{ and } M_1:=M\times \{1\}.$$ 
	Let $f_0:=f_{L,P}:M_0\to{\Bbb R}^d$. Let us fix a point $q\ne y$  in ${\Bbb R}^d$  and set $f_1(x)=q$ for all $x\in M_1$.

	Note that $f_0$ is transversal to $y$ and $f_1^{-1}(y)$ is empty. 
	Let 
$$
 F(x,t):=(1-t)f_0(x)+tf_1(x)
$$	
Then  $F: W\to {\Bbb R}^d$ is  transversal to $y$ with $F|_{M_0}=f_0$ and $F|_{M_1}=f_1$. 
Therefore, $Z_F:=F^{-1}(y)$ is a manifold of dimension $(m+1-d)$.
	
Denote $Z_{i}:=Z_F\bigcap M_i=f_i^{-1}(y), \, i=0,1.$ It is clear that $Z_0=S$ and  $Z_1$ is empty. Thus, $Z_0$ is the boundary of $Z_F$ and so it is a null-cobordant $(m-d)$-dimensional manifold.  	
\end{proof}

Now we extend the class of BUT manifolds. 

\medskip

\noindent{\bf Definition. } 
We say that a closed PL-free $m$-dimensional ${\Bbb Z}_2$-manifold $(M,A)$ is a $\but_{m,d}$  if for any continuous  $g:M \to {\Bbb R}^d$ there is a point $x\in M$ such that $g(A(x))=g(x)$. Equivalently, if a continuous  map $f:M \to {\Bbb R}^d$  is  antipodal,  then the zeros set $Z_f:=f^{-1}(0)$ is not empty.

\medskip

We obviously have 
$$ 
\but=\but_{m,m}\subset \but_{m,m-1}\subset \ldots \subset \but_{m,1}. 
$$
Note that in our paper \cite{Mus} we found a sufficient condition for $(M,\Lambda)$ to be a BUT$_{m,d}$, see \cite[Corollary 3]{Mus}.

Let $T$ be an antipodal triangulation of $M$ Any equivariant labelling  $L:V(T)\to \{+1,-1,+2,-2,\ldots, +d,-d\}$  defines a simplicial map $f_L:T\to C^d$, where $C^d$ is the crosspolytope in ${\Bbb R}^d$ (see Section 4). 
It is easy to see that if $L$ has no complementary edges, then $f_L:T\to {\Bbb R}^d$  has no zeros. It implies the following theorem.

\begin{theorem} Let $m\ge d$. Let $T$ be any equivariant triangulation  of a $\but_{m,d}$ manifold  $(M,A)$. Let $L:V(A)\to \{+1,-1,+2,-2,\ldots, +d,-d\}$  be any equivariant labelling of $T$. Then there exists a complementary edge in $T$.
\end{theorem}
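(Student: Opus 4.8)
The plan is to reuse, almost verbatim, the forward implication in the proof of Theorem \ref{TBUT}, the only change being that the weaker hypothesis ``$(M,A)$ is $\but_{m,d}$'' is invoked in place of ``$M$ is BUT''.

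First I would associate to the equivariant labelling $L$ the simplicial map $f_L\colon T\to C^d$, where $C^d\subset{\Bbb R}^d$ is the crosspolytope with vertex set $\{\pm e_1,\ldots,\pm e_d\}$ for an orthonormal basis $e_1,\ldots,e_d$ of ${\Bbb R}^d$; here $f_L(v)=e_i$ if $L(v)=i$ and $f_L(v)=-e_i$ if $L(v)=-i$, extended affinely over each simplex (in the notation of Section 3, $f_L=f_{L,C^d}$). Since $L(A(v))=-L(v)$ for every vertex $v$ of $T$, we get $f_L(A(x))=-f_L(x)$ for all $x\in M$, so $f_L\colon M\to{\Bbb R}^d$ is antipodal. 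As $m\ge d$ and $(M,A)$ is $\but_{m,d}$, the definition forces $Z_{f_L}=f_L^{-1}(0)\ne\emptyset$. A point of $Z_{f_L}$ lies in the relative interior of a (uniquely determined) simplex $s$ of $T$, so $0\in f_L(s)=\conv\{f_L(v):v\in V(s)\}$.

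Next I would extract a complementary edge from such an $s$. Write $0=\sum_{v\in V(s)}\lambda_v f_L(v)$ with $\lambda_v\ge 0$ and $\sum_v\lambda_v=1$; comparing the $i$-th coordinates gives $\sum_{v:\,L(v)=i}\lambda_v=\sum_{v:\,L(v)=-i}\lambda_v$ for each $i$. Because the $\lambda_v$ are not all zero, there is an index $i$ and a vertex $u\in V(s)$ with $\lambda_u>0$ and $|L(u)|=i$; then the corresponding coordinate identity yields a vertex $w\in V(s)$ with $\lambda_w>0$ and $L(w)=-L(u)$. Since $u$ and $w$ are both vertices of the single simplex $s$, the pair $uw$ is an edge of $T$, and it is complementary, which is exactly the assertion.

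Each step is immediate, so there is no real obstacle here; the only point requiring a line of care is the elementary fact that $0$ lies in the convex hull of a set of crosspolytope vertices only when that set contains an antipodal pair $\{e_i,-e_i\}$ — precisely what the coordinate computation records. One could alternatively just appeal to the remark preceding the statement (``if $L$ has no complementary edges, then $f_L$ has no zeros''), but spelling this out keeps the argument self-contained.
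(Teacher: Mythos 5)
Your proposal is correct and follows essentially the same route as the paper: the paper's argument is precisely that an equivariant labelling with no complementary edge yields an antipodal simplicial map $f_L\colon T\to C^d$ with no zeros, contradicting the $\but_{m,d}$ hypothesis. The only difference is that you spell out the elementary convexity step (that $0\in f_L(s)$ forces an antipodal pair of crosspolytope vertices, hence a complementary edge), which the paper leaves as ``it is easy to see.''
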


This theorem is an extension of Theorem 4.1. When $m\ge d$, it is not hard  to extend other theorems and corollaries from Sections 4 and 5. 

\medskip

\medskip
  
\medskip

\noindent{\bf Acknowledgment.} I  wish to thank Arseniy Akopyan and Fr\'ed\'eric Meunier  for helpful discussions and  comments. 

 \medskip

\noindent O. R. Musin\\ 
Department of Mathematics, University of Texas at Brownsville, One West University Boulevard, Brownsville, TX, 78520 \\
and\\
IITP RAS, Bolshoy Karetny per. 19, Moscow, 127994, Russia\\ 
{\it E-mail address:} oleg.musin@utb.edu

\end{document}